\newtheorem{theorem}{\bf Theorem}[section]
\newtheorem{corollary}[theorem]{\bf Corollary}
\newtheorem{lemma}[theorem]{\bf Lemma}
\newtheorem{proposition}[theorem]{\bf Proposition}
\newtheorem{problem}[theorem]{\bf Problem}
\title{Grundy Packing Coloring of Graphs\thanks{This work was partially supported by the Scientific and Technological Research Council of Turkey (TÜBİTAK) by grant no. 124F114 and by TUBITAK 2221 Fellowships for Visiting Scientists and Scientitsts on Sabbatical Leave program. The second author was partially supported by the Slovenian Research and Innovation Agency by program No. P1-0297}}
\author{{{Didem Gözüpek}$^{1}$  {Iztok Peterin}}$^{2}$\\
\small Department of Computer Engineering, Gebze Technical University, Türkiye$^1$\\
\small Faculty of Electrical Engineering and Computer Science, University of Maribor, Slovenia$^2$\\
\small Institute of Mathematics, Physics and Mechanics, Ljubljana,  Slovenia$^2$\\
{\small Emails: \em didem.gozupek@gtu.edu.tr}$^1$; {\small\em iztok.peterin@um.si}$^2$}
\date{}
\begin{document}

\maketitle

\begin{abstract}
A map $c:V(G)\rightarrow\{1,\dots,k\}$ of a graph $G$ is a packing $k$-coloring if every two different vertices of the same color $i\in \{1,\dots,k\}$ are at distance more than $i$. The packing chromatic number $\chi_{\rho}(G)$ of $G$ is the smallest integer $k$ such that there exists a packing $k$-coloring. In this paper we introduce the notion of \textit{Grundy packing chromatic number}, analogous to the Grundy chromatic number of a graph. We first present a polynomial-time algorithm that is based on a greedy approach and gives a packing coloring of \textcolor{black}{any graph} $G$. We then define the Grundy packing chromatic number $\Gamma_{\rho}(G)$ of a graph $G$ as the maximum value that \textcolor{black}{this algorithm yields in $G$}. We present several properties of $\Gamma_{\rho}(G)$, provide results on the complexity of the problem as well as bounds and some exact results for $\Gamma_{\rho}(G)$.  
\end{abstract}

{\it Keywords: Grundy packing chromatic number, packing coloring, Grundy packing coloring, algorithm, \textcolor{black}{independent domination} number, diameter} 

{\it AMS Subject Classification (2020): 05C15, 05C12, 05C69}

\section{Introduction and preliminaries} 

The concept of packing chromatic number $\chi_{\rho}(G)$ of a graph $G$ was initiated by Goddard et. al. \cite{GHHHR} under the name \emph{broadcast chromatic number} in 2008. The name was changed by Bre\v sar et al. \cite{bresar-klavzar-rall-1} in the next publication on this topic (officially published a year earlier in 2007) because color classes of a packing coloring of $G$ form packings of $G$. First, the focus was on different infinite latices such as (2- and 3-dimensional) grid, hexagonal lattice and triangular lattice. For hexagonal lattice ${\cal H}$, preliminary work on ${\cal H}$ was done already in \cite{bresar-klavzar-rall-1} and by Fiala et al. \cite{fiala-klavzar-lidicky}. Later, Kor\v ze and Vesel \cite{KoVe1} showed that $\chi_{\rho}({\cal H})=7$. Packing chromatic number is infinite for some latices like for $\mathbb{Z}^3$ as shown by Finbow and Rall \cite{finbow-rall}. Later contributions on $\chi_{\rho}(G)$ are oriented into graph operations like Cartesian product \cite{jacobs-jonck-joubert,KoVe1,NaST} or lexicographic product \cite{BoPe}, some special graph classes \cite{ArNT,BaKL,BiGK,BrKR,GaHT,KoVe2} and to packing chromatic critical graphs \cite{BrFe}. Gastineau et al. \cite{GaKT} introduced an $S$-packing coloring that is a generalization of packing coloring which also draw some attention in the community, mainly as $(d,h)$-packing coloring. For more on packing colorings and packing chromatic number we recommend the survey \cite{BFKR}.

For graphs with diameter two, a nice connection between the packing chromatic number $\chi_{p}(G)$ and independence number $\alpha (G)$ of $G$  was shown in \cite{GHHHR} as follows:
\begin{equation}\label{diam2}
\chi_{\rho}(G)=|V(G)|-\alpha (G)+1.
\end{equation}
This result implies that the packing coloring problem is NP-hard even for graphs with diameter two, which is usual for different coloring problems as well. Later, Fiala and Golovach \cite{FiGo} showed that the existence of a packing $k$-coloring is NP-complete also for trees. Despite that fact no (polynomial-time) algorithm was presented to construct a packing coloring of an arbitrary graph $G$, in \cite{KoVe1,KoVe2} the authors did use a computer (SAT solver) to construct packing colorings, but an explicit algorithm was not presented. Kor\v ze et al. \cite{KoMV} presented a heuristic algorithm for the more general problem of $(d,n)$-packing coloring for several (infinite) latices, but not for arbitrary graphs. 

In this paper, we fill this gap in the literature by presenting a polynomial-time algorithm that produces a packing coloring for an arbitrary graph $G$. It is based on a famous greedy algorithm, sometimes also called first-fit algorithm, with some additional property that prevents vertices of the same color to be too close to each other in $G$. As usual for a greedy algorithm, our algorithm also gives only an upper bound for $\chi_{\rho}(G)$. This motivates us to introduce the study of the worst possible case of our algorithm and the number $\Gamma_{\rho}(G)$ of colors in such a case called \emph{Grundy packing chromatic number} as an analogue to the Grundy number $\Gamma(G)$ that represents the maximum number of colors that can be produced by a greedy \textcolor{black}{proper coloring} algorithm for $G$. 

The term \textit{Grundy} was initially coined because \cite{Grun}. 
The \textit{Grundy number} or\textit{ Grundy chromatic number} of an undirected graph is the maximum number of colors that can be used by a greedy coloring strategy that considers the vertices of the graph in a sequence and assigns each vertex its first available color, using a vertex ordering chosen to use as many colors as possible  \cite{CFFMMR}. One of the earliest uses of the term \textit{Grundy number}, as we recognize it today, dates back to Erd\" os et al. \cite{Erdo} where the equality between $\Gamma(G)$ and certain chromatic number of graphs is presented. The complexity of the problem is studied by Bonnet et al. in \cite{BFKS}. Several general bounds for $\Gamma(G)$ are presented by Zaker \cite{Zake}. Grundy number of graph products are studied by Ast\' e et al. \cite{AHLS} and Campos et al. \cite{CGHLSM}. A study on a special graph class due to Araujo and Linhares-Sales is in \cite{ArLS}. Recently, several papers have compared $\Gamma(G)$ with the b-chromatic number \cite{HaSa, MaZa1, MaZa2, MaZa3}. Interesting upper bounds for $\Gamma(G)$ in terms of the domination number and girth of a graph are attributed to Khaleghi and Zaher \cite{KhZa}. We conclude this brief overview by noting that the term 'Grundy' is also employed in domination theory. Interested readers are referred to the seminal paper \cite{BBGK} on this subject for further insight. 

We now introduce the notation used throughout the rest of this paper. Let $G$ be a graph without loops and multiple edges. The  number of edges on a shortest path between vertices $u$ and $v$ of $G$ is called the \emph{distance} $d(u,v)$ between $u$ and $v$. The maximum length of a shortest path between $v\in V(G)$ and any other vertex $u$ is called the \emph{eccentricity} of $v$, denoted by ${\rm ecc}(v)$. The maximum eccentricity among all vertices is the \emph{diameter} of $G$, denoted by ${\rm diam}(G)$, and the minimum eccentricity is the \emph{radius} of $G$, denoted by ${\rm rad}(G)$. A vertex of minimum eccentricity is called a \emph{central vertex} of $G$. If $d(u,v)={\rm diam}(G)$, then $u$ (and $v$) is referred to as a \emph{diametrical vertex}, while and $u$ and $v$ form a \emph{diametrical pair}. A vertex is called \emph{universal} if it is adjacent to all the other vertices. We denote open and closed neighborhoods of $v\in V(G)$ by $N(v)$ and $N[v]$, respectively. A complete graph, a path and a cycle on $n$ vertices are denoted by $K_n$, $P_n$ and $C_n$, respectively. A \emph{clique} is a subset of vertices $V(G)$ of graph $G$ such that every two distinct vertices in the clique are adjacent.

Let $D\subseteq V(G)$. If no two vertices from $D$ are adjacent, then $D$ is an \emph{independent set}. An independent set that is not a subset of any other independent set is called \textit{maximal}. The maximum cardinality of an independent set is called the \emph{independence number} $\alpha(G)$ of $G$, whereas the minimum cardinality of a maximal independent set of $G$ is called the \emph{independent domination number} $i(G)$ of $G$. A maximal independent set of cardinality $\alpha (G)$ or $i(G)$ is simply called an $\alpha(G)$-set or $i(G)$-set, respectively.

Let $t$ be a positive integer. A set $X\subseteq V(G)$ is a $t$-\emph{packing} if any two distinct vertices from $X$ are at a distance greater than $t$. The maximum cardinality of a $t$-packing of $G$ is called the $t$-\emph{packing number} of $G$ and is denoted by $\rho_t(G)$. Notice, that if $t=1$, then the $1$-packing number is equal to $\alpha (G)$. A \emph{packing} $k$-\emph{coloring} of $G$ is a map $c: V(G) \rightarrow \{1,\dots,k\}$, such that if $c(u)=c(v)=i$, then $d_G(u,v)>i$. Hence, an $i^{th}$ color class of a packing coloring represents an $i$-packing of $G$. Clearly, $\chi_{\rho}(G)$ is the minimum integer $k$ for which there exists a packing $k$-coloring of $G$. \textcolor{black}{Observe feasible packing colorings of paths $P_4,P_5,P_8$ and $P_{15}$ in Figure \ref{paths}, which will play a role in the last section.}

\begin{figure}[H]
\begin{center}
\begin{tikzpicture}[xscale=.8, yscale=.8, style=thick,x=1cm,y=1cm]
\def\vr{3.5pt} 


\path (-0.5,3) coordinate (a1);
\path (1,3) coordinate (a2);
\path (2.5,3) coordinate (a3);
\path (4,3) coordinate (a4);
\path (7,3) coordinate (b1);
\path (8.5,3) coordinate (b2);
\path (10,3) coordinate (b3);
\path (11.5,3) coordinate (b4);
\path (13,3) coordinate (b5);
\path (1,1.5) coordinate (c1);
\path (2.5,1.5) coordinate (c2);
\path (4,1.5) coordinate (c3);
\path (5.5,1.5) coordinate (c4);
\path (7,1.5) coordinate (c5);
\path (8.5,1.5) coordinate (c6);
\path (10,1.5) coordinate (c7);
\path (11.5,1.5) coordinate (c8);
\path (-1.75,0) coordinate (d2);
\path (-.5,0) coordinate (d3);
\path (0.75,0) coordinate (d4);
\path (2,0) coordinate (d5);
\path (3.25,0) coordinate (d6);
\path (4.5,0) coordinate (d7);
\path (5.75,0) coordinate (d8);
\path (7,0) coordinate (d9);
\path (8.25,0) coordinate (d10);
\path (9.5,0) coordinate (d11);
\path (10.75,0) coordinate (d12);
\path (12,0) coordinate (d13);
\path (13.25,0) coordinate (d14);
\path (14.5,0) coordinate (d15);
\path (15.75,0) coordinate (d16);
\draw (c1) -- (c8);
\draw (a1) -- (a4);
\draw (b1) -- (b5);
\draw (d2) -- (d16);

\draw (a1) [fill=white] circle (\vr); \draw (a2) [fill=white] circle (\vr);
\draw (a3) [fill=white] circle (\vr); \draw (a4) [fill=white] circle (\vr);
\draw (b1) [fill=white] circle (\vr); \draw (b2) [fill=white] circle (\vr);
\draw (b3) [fill=white] circle (\vr); \draw (b4) [fill=white] circle (\vr);
\draw (b5) [fill=white] circle (\vr); \draw (c1) [fill=white] circle (\vr);
\draw (c2) [fill=white] circle (\vr); \draw (c3) [fill=white] circle (\vr);
\draw (c4) [fill=white] circle (\vr); \draw (c5) [fill=white] circle (\vr);
\draw (c6) [fill=white] circle (\vr); \draw (c7) [fill=white] circle (\vr);
\draw (c8) [fill=white] circle (\vr); 
\draw (d2) [fill=white] circle (\vr); \draw (d3) [fill=white] circle (\vr);
\draw (d4) [fill=white] circle (\vr); \draw (d5) [fill=white] circle (\vr);
\draw (d6) [fill=white] circle (\vr); \draw (d7) [fill=white] circle (\vr);
\draw (d8) [fill=white] circle (\vr); \draw (d9) [fill=white] circle (\vr);
\draw (d10) [fill=white] circle (\vr); \draw (d11) [fill=white] circle (\vr);
\draw (d12) [fill=white] circle (\vr); \draw (d13) [fill=white] circle (\vr);
\draw (d14) [fill=white] circle (\vr); \draw (d15) [fill=white] circle (\vr);
\draw (d16) [fill=white] circle (\vr); 

\draw[anchor = north] (a1) node {$1$};
\draw[anchor = north] (a2) node {$2$};
\draw[anchor = north] (a3) node {$1$};
\draw[anchor = north] (a4) node {$3$};
\draw[anchor = north] (b1) node {$1$};
\draw[anchor = north] (b2) node {$3$};
\draw[anchor = north] (b3) node {$2$};
\draw[anchor = north] (b4) node {$1$};
\draw[anchor = north] (b5) node {$4$};
\draw[anchor = north] (c1) node {$1$};
\draw[anchor = north] (c2) node {$4$};
\draw[anchor = north] (c3) node {$2$};
\draw[anchor = north] (c4) node {$1$};
\draw[anchor = north] (c5) node {$3$};
\draw[anchor = north] (c6) node {$5$};
\draw[anchor = north] (c7) node {$1$};
\draw[anchor = north] (c8) node {$2$};
\draw[anchor = north] (d2) node {$2$};
\draw[anchor = north] (d3) node {$1$};
\draw[anchor = north] (d4) node {$4$};
\draw[anchor = north] (d5) node {$3$};
\draw[anchor = north] (d6) node {$1$};
\draw[anchor = north] (d7) node {$2$};
\draw[anchor = north] (d8) node {$5$};
\draw[anchor = north] (d9) node {$1$};
\draw[anchor = north] (d10) node {$6$};
\draw[anchor = north] (d11) node {$2$};
\draw[anchor = north] (d12) node {$1$};
\draw[anchor = north] (d13) node {$3$};
\draw[anchor = north] (d14) node {$4$};
\draw[anchor = north] (d15) node {$1$};
\draw[anchor = north] (d16) node {$2$};

\draw(-1.2,3) node {$P_4$:};
\draw(6.3,3) node {$P_5$:};
\draw(0.3,1.5) node {$P_8$:};
\draw(-2.5,0) node {$P_{15}$:};
\end{tikzpicture}
\end{center}
\caption{Some packing colorings of selected paths.}
\label{paths}
\end{figure}

The remainder of this paper is organized as follows. In Section \ref{sec:Algorithm}, we provide a greedy polynomial-time algorithm that finds a packing coloring of a graph. In Section \ref{sec:GrundyPackingChromatic}, we define the Grundy packing chromatic number $\Gamma_{\rho}(G)$ of a graph $G$ and present a polynomial-time construction for $G$ that connects $\Gamma_{\rho}(G)$ with the \textcolor{black}{independent domination number,} which yields the complexity of finding $\Gamma_{\rho}(G)$. In Section \ref{sec:LargePackingGrundy} we describe all graphs with $\Gamma_{\rho}(G)\in\{|V(G)|,|V(G)|-1\}$. Section \ref{sec:SmallDiam} is devoted to $\Gamma_{\rho}(G)$ for graphs with small diameter (at most three). We conclude the paper with Section \ref{sec:Conclusion}, where several additional aspects of $\Gamma_{\rho}(G)$ are discussed and several open problems are presented.

\section{Greedy Packing Coloring Algorithm} \label{sec:Algorithm}
Recall that the greedy algorithm employed in Grundy coloring is such that each time an uncolored vertex $v$ is picked, it is colored with the smallest positive integer not present in the neighborhood of $v$. We adapt this greedy algorithm for standard graph coloring for packing colorings. To this end, we additionally keep track of the distances of already colored vertices in the following way: an $n$-tuple $(v_1,\dots,v_n)$  is constructed for each vertex $v \in V(G)$, where $n$ is the number of vertices and $v_i$ is a binary variable indicating whether color $i$ is available for vertex $v$. More precisely, for an uncolored vertex $v$ at some stage of our algorithm, $v_i=0$ means that color $i$ was already assigned to a vertex at distance at most $i$ to $v$ and hence $v$ cannot be colored with $i$. Likewise, $v_i=1$ means that color $i$ is available for vertex $v$. So, when $v$ is processed by our greedy algorithm, we simply need to find the smallest index $i\in\{1,\dots,n\}$ such that $v_i=1$ and $v_1=\cdots=v_{i-1}=0$, and then  $v$ is colored with $i$. To ensure that the obtained coloring is a feasible packing coloring, that is, the vertices of the same color are within distance at least $i+1$ apart, we run a BFS algorithm starting from the vertex under consideration, say $v$ with $c(v)=i$, and set $u_i=0$ for every vertex $u$ in any of first $i$ distance levels of BFS algorithm. The pseudocode of our algorithm is as follows. \bigskip

\begin{algorithm}[H]\label{alg:greedy}
\SetAlgoLined
\KwData{Graph $G$ and every vertex with $|V(G)|$ dimensional array with $1$s for every vertex.}
\KwResult{Packing coloring $c$ of $G$.}
\While{there exists an uncolored vertex}{
	1. pick an uncolored vertex $v$\\
	2. find first non-zero entry $i$ in array for $v$ and set $c(v)=i$\\
	3. find the vertices that are at distance at most $i$ from $v$ using BFS algorithm and for every such uncolored vertex $u$ set  $u_i=0$}
\caption{Packing coloring}
\end{algorithm}\bigskip

We illustrate a step-by-step execution of Algorithm \ref{alg:greedy} on an example graph in Figure \ref{exalg}, where Step 1 depicts the initial condition and at each step the vertex shown in black is colored and only the color numbers are displayed for the vertices previously colored.

\begin{figure}[htb]
\begin{center}
\begin{tikzpicture}[xscale=.8, yscale=.8, style=thick,x=1cm,y=1cm]
\def\vr{3.5pt} 


\path (0,0) coordinate (a);
\path (1.5,0) coordinate (b);
\path (2.5,1) coordinate (c);
\path (2.5,-1) coordinate (d);
\path (3.5,0) coordinate (e);
\path (5,0) coordinate (f);


\draw (a) -- (b) -- (c) -- (e) -- (f);
\draw (b) -- (d) -- (e);
\draw (c) -- (d);

\draw (a) [fill=white] circle (\vr);
\draw (b) [fill=white] circle (\vr);
\draw (c) [fill=white] circle (\vr);
\draw (d) [fill=white] circle (\vr);
\draw (e) [fill=white] circle (\vr);
\draw (f) [fill=white] circle (\vr);

\draw(-1,-0.5) node {$(1,1,1,1,1,1)$};
\draw(0,0.5) node {$(1,1,1,1,1,1)$};
\draw[anchor = south] (c) node {$(1,1,1,1,1,1)$};
\draw[anchor = north] (d) node {$(1,1,1,1,1,1)$};
\draw(5,0.5) node {$(1,1,1,1,1,1)$};
\draw(6,-0.5) node {$(1,1,1,1,1,1)$};
\node at (2.5,-2.5) {Step 1};


\path (11,0) coordinate (a);
\path (12.5,0) coordinate (b);
\path (13.5,1) coordinate (c);
\path (13.5,-1) coordinate (d);
\path (14.5,0) coordinate (e);
\path (16,0) coordinate (f);

\draw (a) -- (b) -- (c) -- (e) -- (f);
\draw (b) -- (d) -- (e);
\draw (c) -- (d);

\draw (a) [fill=white] circle (\vr);
\draw (b) [fill=black] circle (\vr);
\draw (c) [fill=white] circle (\vr);
\draw (d) [fill=white] circle (\vr);
\draw (e) [fill=white] circle (\vr);
\draw (f) [fill=white] circle (\vr);

\draw(10,-0.5) node {$(0,1,1,1,1,1)$};
\draw[anchor = south east] (b) node {$1$};
\draw[anchor = south] (c) node {$(0,1,1,1,1,1)$};
\draw[anchor = north] (d) node {$(0,1,1,1,1,1)$};
\draw(16,0.5) node {$(1,1,1,1,1,1)$};
\draw(17,-0.5) node {$(1,1,1,1,1,1)$};


\path (0,-5) coordinate (a);
\path (1.5,-5) coordinate (b);
\path (2.5,-4) coordinate (c);
\path (2.5,-6) coordinate (d);
\path (3.5,-5) coordinate (e);
\path (5,-5) coordinate (f);

\draw (a) -- (b) -- (c) -- (e) -- (f);
\draw (b) -- (d) -- (e);
\draw (c) -- (d);

\draw (a) [fill=black] circle (\vr);
\draw (b) [fill=white] circle (\vr);
\draw (c) [fill=white] circle (\vr);
\draw (d) [fill=white] circle (\vr);
\draw (e) [fill=white] circle (\vr);
\draw (f) [fill=white] circle (\vr);

\draw[anchor = east] (a) node {$2$};
\draw[anchor = south east] (b) node {$1$};
\draw[anchor = south] (c) node {$(0,0,1,1,1,1)$};
\draw[anchor = north] (d) node {$(0,0,1,1,1,1)$};
\draw(5,-4.5) node {$(1,1,1,1,1,1)$};
\draw(6,-5.5) node {$(1,1,1,1,1,1)$};


\path (11,-5) coordinate (a);
\path (12.5,-5) coordinate (b);
\path (13.5,-4) coordinate (c);
\path (13.5,-6) coordinate (d);
\path (14.5,-5) coordinate (e);
\path (16,-5) coordinate (f);

\draw (a) -- (b) -- (c) -- (e) -- (f);
\draw (b) -- (d) -- (e);
\draw (c) -- (d);

\draw (a) [fill=white] circle (\vr);
\draw (b) [fill=white] circle (\vr);
\draw (c) [fill=black] circle (\vr);
\draw (d) [fill=white] circle (\vr);
\draw (e) [fill=white] circle (\vr);
\draw (f) [fill=white] circle (\vr);

\draw[anchor = east] (a) node {$2$};
\draw[anchor = south east] (b) node {$1$};
\draw[anchor = south] (c) node {$3$};
\draw[anchor = north] (d) node {$(0,0,0,1,1,1)$};
\draw(16,-4.5) node {$(1,1,0,1,1,1)$};
\draw(17,-5.5) node {$(1,1,0,1,1,1)$};
\node at (13.5,-2.5) {Step 2};
\node at (13.5,-7.5) {Step 4};
\node at (13.5,-12) {Step 6};


\path (0,-10) coordinate (a);
\path (1.5,-10) coordinate (b);
\path (2.5,-9) coordinate (c);
\path (2.5,-11) coordinate (d);
\path (3.5,-10) coordinate (e);
\path (5,-10) coordinate (f);

\draw (a) -- (b) -- (c) -- (e) -- (f);
\draw (b) -- (d) -- (e);
\draw (c) -- (d);

\draw (a) [fill=white] circle (\vr);
\draw (b) [fill=white] circle (\vr);
\draw (c) [fill=white] circle (\vr);
\draw (d) [fill=black] circle (\vr);
\draw (e) [fill=white] circle (\vr);
\draw (f) [fill=white] circle (\vr);

\draw[anchor = east] (a) node {$2$};
\draw[anchor = south east] (b) node {$1$};
\draw[anchor = south] (c) node {$3$};
\draw[anchor = north] (d) node {$4$};
\draw(5,-9.5) node {$(1,1,0,0,1,1)$};
\draw(6,-10.5) node {$(1,1,0,0,1,1)$};
\node at (2.5,-7.5) {Step 3};
\node at (2.5,-12) {Step 5};


\path (11,-10) coordinate (a);
\path (12.5,-10) coordinate (b);
\path (13.5,-9) coordinate (c);
\path (13.5,-11) coordinate (d);
\path (14.5,-10) coordinate (e);
\path (16,-10) coordinate (f);

\draw (a) -- (b) -- (c) -- (e) -- (f);
\draw (b) -- (d) -- (e);
\draw (c) -- (d);

\draw (a) [fill=white] circle (\vr);
\draw (b) [fill=white] circle (\vr);
\draw (c) [fill=white] circle (\vr);
\draw (d) [fill=white] circle (\vr);
\draw (f) [fill=white] circle (\vr);
\draw (e) [fill=black] circle (\vr);

\draw[anchor = east] (a) node {$2$};
\draw[anchor = south east] (b) node {$1$};
\draw[anchor = south] (c) node {$3$};
\draw[anchor = north] (d) node {$4$};
\draw[anchor = south west] (e) node {$1$};
\draw(17,-10.5) node {$(0,1,0,0,1,1)$};


\path (6,-14) coordinate (a);
\path (7.5,-14) coordinate (b);
\path (8.5,-13) coordinate (c);
\path (8.5,-15) coordinate (d);
\path (9.5,-14) coordinate (e);
\path (11,-14) coordinate (f);

\draw (a) -- (b) -- (c) -- (e) -- (f);
\draw (b) -- (d) -- (e);
\draw (c) -- (d);

\draw (a) [fill=white] circle (\vr);
\draw (b) [fill=white] circle (\vr);
\draw (c) [fill=white] circle (\vr);
\draw (d) [fill=white] circle (\vr);
\draw (f) [fill=black] circle (\vr);
\draw (e) [fill=white] circle (\vr);

\draw[anchor = east] (a) node {$2$};
\draw[anchor = south east] (b) node {$1$};
\draw[anchor = south] (c) node {$3$};
\draw[anchor = north] (d) node {$4$};
\draw[anchor = south west] (e) node {$1$};
\draw[anchor = west] (f) node {$2$};
\node at (8.5,-16) {Step 7};

\end{tikzpicture}
\end{center}
\caption{Step-by-step execution of Algorithm 1. The vertex shown in black is colored at each step.}
\label{exalg}
\end{figure}

\begin{theorem} \label{compl}
Algorithm 1 computes a packing coloring of a given graph $G$ in ${\cal O}(mn^2)$ time in the worst case, where $n=|V(G)|$ and $m=|E(G)|$. 
\end{theorem}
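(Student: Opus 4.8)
The plan is to establish the two claims packed into the statement: that Algorithm 1 terminates with a legitimate packing coloring, and that its worst-case running time is ${\cal O}(mn^2)$. I would prove correctness first and then carry out the running-time accounting.

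For correctness I would begin with termination. Each pass of the while loop colors exactly one vertex that was previously uncolored and never recolors a vertex, so after at most $n$ passes all vertices are colored and the loop stops; in particular the loop executes exactly $n$ times. Next I would check that Step 2 never fails, i.e. that the picked vertex $v$ always has an available color. The point is that a coordinate $v_j$ is switched to $0$ only in Step 3, and only when some vertex at distance at most $j$ from $v$ receives color $j$; since a colored vertex carries a single color, each already-colored vertex can zero at most one coordinate of the length-$n$ array of $v$. As at most $n-1$ vertices are colored before $v$, at most $n-1$ of the $n$ coordinates can be $0$, so a smallest available index $i$ exists and $c(v)=i$ is well defined.

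To see that the resulting map $c$ is a packing coloring, I would take any two distinct vertices $u,v$ with $c(u)=c(v)=i$ and show $d(u,v)>i$. Assume $u$ is colored before $v$. When $v$ is processed we have $v_i=1$, since $i$ is chosen as the first nonzero coordinate of $v$. If $d(u,v)\le i$ held, then Step 3 executed when $u$ was colored $i$ would have set $v_i=0$, and this coordinate is never reset; this contradicts $v_i=1$ at the moment $v$ is colored. Hence $d(u,v)>i$, which is precisely the requirement for color class $i$, so $c$ is a packing coloring.

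For the running time, the outer loop runs $n$ times. Within an iteration, scanning the array in Step 2 to find the first nonzero entry costs ${\cal O}(n)$, and Step 3 performs one breadth-first search from $v$ followed by marking the (at most $n$) vertices within distance $i$. A standard BFS, together with these ${\cal O}(n)$ updates, runs in ${\cal O}(n+m)$, so a careful count gives ${\cal O}\big(n(n+m)\big)$ overall. Bounding each search more crudely, by noting that it may span up to $i\le n$ distance levels and charging ${\cal O}(m)$ per level, yields ${\cal O}(nm)$ per iteration and hence the stated worst-case bound ${\cal O}(mn^2)$; I would present the tighter intermediate bound explicitly and then absorb it into ${\cal O}(mn^2)$. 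I expect the main obstacle to lie in the correctness half rather than in the timing: one must make airtight both that a color is always available (so the algorithm never gets stuck) and that the zeroing-out of the $i$-ball in Step 3, combined with the order in which vertices are processed, is exactly what enforces the distance-more-than-$i$ condition on every color class.
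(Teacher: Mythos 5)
Your proof is correct and follows essentially the same route as the paper: the paper dismisses correctness as "clear" and obtains the time bound by the same accounting ($n$ iterations, ${\cal O}(n)$ for the array scan, ${\cal O}(m)$ for the BFS), stating the result as ${\cal O}(mn^2)$. Your write-up merely supplies the correctness details the paper omits and notes, accurately, that the same accounting already yields the tighter bound ${\cal O}(n(n+m))$, which the stated ${\cal O}(mn^2)$ safely absorbs.
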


\begin{proof}
It is clear that Algorithm 1 produces a feasible packing coloring. As for its time complexity, the while loop iterates 
${\cal O}(n)$ time. In each iteration of the while loop, Step 2 takes ${\cal O}(n)$ time in the worst case because an array of length $n$ is traversed until the first nonzero entry is found. BFS algorithm in Step 3 has time complexity ${\cal O}(m)$ in worst case, making the overall worst case time complexity of Algorithm 1 as ${\cal O}(mn^2)$.
\end{proof}



\section{Grundy Packing Chromatic Number} \label{sec:GrundyPackingChromatic}
Since Algorithm 1 produces a feasible packing coloring, it yields an upper bound for $\chi_{\rho}(G)$ and hence it is called a \emph{greedy packing coloring} of $G$. \textcolor{black}{Moreover, every packing coloring can be obtained using Algorithm 1 by selecting a sequence of vertices, starting with those assigned color one, followed by those assigned color two, and so on. Consequently,} $\chi_{\rho}(G)$ is the minimum number of colors that can be obtained by Algorithm 1. We are often interested in finding the other extreme: the maximum numbers of colors in a coloring produced by Algorithm 1, which we refer to as the \emph{Grundy packing chromatic number} of $G$ denoted by $\Gamma_{\rho}(G)$. This provides insight into the quality of the heuristic algorithm and how much reliable it is. An alternative description of $\Gamma_{\rho}(G)$ is simply the maximum number of colors in a packing coloring, where each vertex of color $i\geq 2$ has a vertex of color $j$ at a distance at most $j$ for every $j\in\{1,\dots,i-1\}$.  

We start by applying a polynomial-time transformation to a graph $G$ to obtain a graph $G(k)$, as introduced by Argiroffo et al. \cite{ArNT}. While their original work focused on utilizing this transformation for $\chi_{\rho}(G)$, we demonstrate its applicability to $\Gamma_{\rho}(G)$ with minor adjustments. To ensure that the paper is self-contained, we provide a detailed description of the transformation.

For a graph $G$ we denote by $G^{\ell}$ a graph with $V(G^{\ell})=\{v^{\ell}:v\in V(G)\}$ and $E(G^{\ell})=\{u^{\ell}v^{\ell}:d_G(u,v)\leq \ell\}$. Clearly, $G^1\cong G$ and if $k\geq {\rm diam}(G)$, then $G^k\cong K_n$ for $n=|V(G)|$. Further, for a positive integer $k$ we define graph $G(k)$ by $V(G(k))=\cup_{i=1}^k V(G^i)$ and $E(G(k))=\{v^jv^i:1\leq i<j\leq k\}\cup\left(\cup_{i=1}^k E(G^i)\right)$. Notice that the vertex set \textcolor{black}{$\{v^1,\dots,v^k\}$} constitutes a clique $Q_v$ in $G(k)$ and that every independent set of $G$ contains at most one vertex from this set. \textcolor{black}{Observe graph $P_5(4)$ on Figure \ref{P_5(4)} together with a $i(P_5(4))$-set (black vertices). Notice also that if we color the vertices of $P_5$ corresponding to the black vertices of $P_5^i$ with color $i$, $i\in\{1,2,3,4\}$, then we obtain a packing $4$-coloring of $P_5$. Another example, shown in} Figure \ref{G(3)}, demonstrates graph $G$, its corresponding $G(3)$ together with their minimum maximal independent sets $i(G)$ and $i(G(3))$, denoted by black vertices.

\begin{figure}[htb]
\begin{center}
\begin{tikzpicture}[xscale=.8, yscale=.8, style=thick,x=1cm,y=1cm]
\def\vr{3.5pt} 

\path (-2,2) coordinate (a1);
\path (-1,1) coordinate (a2);
\path (0,0) coordinate (a3);
\path (1,-1) coordinate (a4);
\path (2,-2) coordinate (a5);

\path (5,-2) coordinate (b5);
\path (6,-1) coordinate (b4);
\path (7,0) coordinate (b3);
\path (8,1) coordinate (b2);
\path (9,2) coordinate (b1);

\path (9,5) coordinate (c1);
\path (8,6) coordinate (c2);
\path (7,7) coordinate (c3);
\path (6,8) coordinate (c4);
\path (5,9) coordinate (c5);

\path (-2,5) coordinate (d1);
\path (-1,6) coordinate (d2);
\path (0,7) coordinate (d3);
\path (1,8) coordinate (d4);
\path (2,9) coordinate (d5);
\draw (a1) -- (a5) -- (b5) -- (c5) -- (d5) -- (a5) -- (c5);
\draw (d1) -- (b1) -- (c1) -- (d1) -- (a1) -- (b1) -- (b5) -- (d5);
\draw (a1) -- (c1) -- (c5);
\draw (d1) -- (d5);
\draw (a2) -- (b2) -- (c2) -- (d2) -- (a2) -- (c2);
\draw (d3) -- (b3) -- (c3) -- (d3) -- (a3) -- (b3);
\draw (d2) -- (b2);
\draw (a3) -- (c3);
\draw (d4) -- (b4) -- (c4) -- (d4) -- (a4) -- (b4);
\draw (c4) -- (a4);

\draw (b5) to [bend right] (b3);
\draw (b4) to [bend right] (b2);
\draw (b3) to [bend right] (b1);
\draw (c5) to [bend left] (c3);
\draw (c4) to [bend left] (c2);
\draw (c3) to [bend left] (c1);
\draw (c4) to [bend left] (c1);
\draw (c5) to [bend left] (c2);
\draw (d1) to [bend left] (d3);
\draw (d1) to [bend left] (d4);
\draw (d1) to [bend left] (d5);
\draw (d2) to [bend left] (d4);
\draw (d2) to [bend left] (d5);
\draw (d3) to [bend left] (d5);

\draw (a1) [fill=white] circle (\vr); \draw (a2) [fill=black] circle (\vr);
\draw (a3) [fill=white] circle (\vr); \draw (a4) [fill=white] circle (\vr);
\draw (a5) [fill=black] circle (\vr); \draw (b1) [fill=white] circle (\vr);
\draw (b2) [fill=white] circle (\vr); \draw (b3) [fill=black] circle (\vr);
\draw (b4) [fill=white] circle (\vr); \draw (b5) [fill=white] circle (\vr);
\draw (c1) [fill=black] circle (\vr); \draw (c2) [fill=white] circle (\vr);
\draw (c3) [fill=white] circle (\vr); \draw (c4) [fill=white] circle (\vr);
\draw (c5) [fill=white] circle (\vr); \draw (d1) [fill=white] circle (\vr);
\draw (d2) [fill=white] circle (\vr); \draw (d3) [fill=white] circle (\vr);
\draw (d4) [fill=black] circle (\vr); \draw (d5) [fill=white] circle (\vr);

\draw(3.5,-3) node {$P_5(4)$};
\draw(-0.5,-.5) node {$P_5^1$};
\draw(8,-.5) node {$P_5^2$};
\draw(8,8) node {$P_5^3$};
\draw(-0.7,8) node {$P_5^4$};

\end{tikzpicture}
\end{center}
\caption{Graph $P_5(4)$ together with $i(P_5(4))$-set (black vertices).}
\label{P_5(4)}
\end{figure}

\begin{figure}[htb]
\begin{center}
\begin{tikzpicture}[xscale=.8, yscale=.8, style=thick,x=1cm,y=1cm]
\def\vr{3.5pt} 


\path (-1,1) coordinate (a);
\path (0.5,1) coordinate (b);
\path (2,1) coordinate (c);
\path (3.5,1) coordinate (d);
\path (-0.25,2) coordinate (e);
\path (0.5,3) coordinate (f);
\path (2,3) coordinate (g);
\path (2.75,2) coordinate (h);
\path (1.25,4) coordinate (i);
\path (1.25,2) coordinate (j);

\draw (a) -- (b) -- (c) -- (d) -- (h) -- (g) -- (i) -- (f) -- (e) -- (a);
\draw (b) -- (h) -- (f) -- (b);
\draw (c) -- (j) -- (e);
\draw (j) -- (g);

\draw (a) [fill=white] circle (\vr);
\draw (b) [fill=white] circle (\vr);
\draw (c) [fill=black] circle (\vr);
\draw (d) [fill=white] circle (\vr);
\draw (e) [fill=black] circle (\vr);
\draw (f) [fill=white] circle (\vr);
\draw (g) [fill=black] circle (\vr);
\draw (h) [fill=white] circle (\vr);
\draw (i) [fill=white] circle (\vr);
\draw (j) [fill=white] circle (\vr);

\draw(1.25,0.2) node {$G$};


\path (8,2) coordinate (a);
\path (9.5,2) coordinate (b);
\path (11,2) coordinate (c);
\path (12.5,2) coordinate (d);
\path (8.75,3) coordinate (e);
\path (9.5,4) coordinate (f);
\path (11,4) coordinate (g);
\path (11.75,3) coordinate (h);
\path (10.25,5) coordinate (i);
\path (10.25,3) coordinate (j);

\draw (a) -- (b) -- (c) -- (d) -- (h) --(g) -- (i) -- (f) -- (e) -- (a);
\draw (b) -- (h) -- (f) -- (b);
\draw (c) -- (j) -- (e);
\draw (j) -- (g);


\path (4,-2) coordinate (a1);
\path (5.5,-2) coordinate (b1);
\path (7,-2) coordinate (c1);
\path (8.5,-2) coordinate (d1);
\path (4.75,-1) coordinate (e1);
\path (5.5,0) coordinate (f1);
\path (7,0) coordinate (g1);
\path (7.75,-1) coordinate (h1);
\path (6.25,1) coordinate (i1);
\path (6.25,-1) coordinate (j1);

\draw (a1) -- (b1) -- (c1) -- (d1) -- (h1) --(g1) -- (i1) -- (f1) -- (e1) -- (a1);
\draw (b1) -- (h1) -- (f1) -- (b1);
\draw (c1) -- (j1) -- (e1);
\draw (j1) -- (g1);
\draw (g1) to [bend left] (d1);
\draw (i1) to [bend left] (h1);
\draw (a1) to [bend left] (f1);
\draw (e1) to [bend left] (i1);
\draw (a1) to [bend right] (c1);
\draw (b1) to [bend right] (d1);
\draw (e1) to [bend right] (h1);
\draw (c1) to [bend left] (f1);
\draw (g1) to [bend left] (b1);
\draw (e1) -- (d1) -- (f1) -- (g1) -- (c1) -- (i1) -- (b1) -- (e1) -- (g1) -- (a1) -- (h1);
\draw (b1) -- (j1) -- (f1);
\draw (e1) -- (c1) -- (h1) -- (j1) -- (a1);
\draw (d1) -- (j1) -- (i1);

\path (13,-3) coordinate (a2);
\path (14.5,-3) coordinate (b2);
\path (16,-3) coordinate (c2);
\path (17.5,-3) coordinate (d2);
\path (13.75,-2) coordinate (e2);
\path (14.5,-1) coordinate (f2);
\path (16,-1) coordinate (g2);
\path (16.75,-2) coordinate (h2);
\path (15.25,0) coordinate (i2);
\path (15.25,-2) coordinate (j2);

\draw (a2) -- (b2) -- (c2) -- (d2) -- (h2) --(g2) -- (i2) -- (f2) -- (e2) -- (a2);
\draw (b2) -- (h2) -- (f2) -- (b2);
\draw (c2) -- (j2) -- (e2);
\draw (j2) -- (g2);
\draw (g2) to [bend left] (d2);
\draw (i2) to [bend left] (h2);
\draw (a2) to [bend left] (f2);
\draw (e2) to [bend left] (i2);
\draw (a2) to [bend right] (c2);
\draw (b2) to [bend right] (d2);
\draw (e2) to [bend right] (h2);
\draw (c2) to [bend left] (f2);
\draw (g2) to [bend left] (b2);
\draw (e2) -- (d2) -- (f2) -- (g2) -- (c2) -- (i2) -- (b2) -- (e2) -- (g2) -- (a2) -- (h2);
\draw (b2) -- (j2) -- (f2);
\draw (e2) -- (c2) -- (h2) -- (j2) -- (a2);
\draw (d2) -- (j2) -- (i2);

\draw (i2) to [bend left] (d2);
\draw (a2) to [bend left] (i2);
\draw (a2) to [bend right] (d2);

\draw (a2) -- (a1) -- (a) -- (a2);
\draw (b2) -- (b1) -- (b) -- (b2);
\draw (c2) -- (c1) -- (c) -- (c2);
\draw (d2) -- (d1) -- (d) -- (d2);
\draw (e2) -- (e1) -- (e) -- (e2);
\draw (f2) -- (f1) -- (f) -- (f2);
\draw (g2) -- (g1) -- (g) -- (g2);
\draw (h2) -- (h1) -- (h) -- (h2);
\draw (i2) -- (i1) -- (i) -- (i2);
\draw (j2) -- (j1) -- (j) -- (j2);

\draw (a) [fill=white] circle (\vr);
\draw (b) [fill=white] circle (\vr);
\draw (c) [fill=black] circle (\vr);
\draw (d) [fill=white] circle (\vr);
\draw (e) [fill=black] circle (\vr);
\draw (f) [fill=white] circle (\vr);
\draw (g) [fill=black] circle (\vr);
\draw (h) [fill=white] circle (\vr);
\draw (i) [fill=white] circle (\vr);
\draw (j) [fill=white] circle (\vr);

\draw (a1) [fill=white] circle (\vr);
\draw (b1) [fill=black] circle (\vr);
\draw (c1) [fill=white] circle (\vr);
\draw (d1) [fill=white] circle (\vr);
\draw (e1) [fill=white] circle (\vr);
\draw (f1) [fill=white] circle (\vr);
\draw (g1) [fill=white] circle (\vr);
\draw (h1) [fill=white] circle (\vr);
\draw (i1) [fill=white] circle (\vr);
\draw (j1) [fill=white] circle (\vr);

\draw (a2) [fill=white] circle (\vr);
\draw (b2) [fill=white] circle (\vr);
\draw (c2) [fill=white] circle (\vr);
\draw (d2) [fill=white] circle (\vr);
\draw (e2) [fill=white] circle (\vr);
\draw (f2) [fill=white] circle (\vr);
\draw (g2) [fill=white] circle (\vr);
\draw (h2) [fill=black] circle (\vr);
\draw (i2) [fill=white] circle (\vr);
\draw (j2) [fill=white] circle (\vr);
\draw(10,-4.2) node {$G(3)$};
\draw(12,4) node {$G^1$};
\draw(6,-3) node {$G^2$};
\draw(16,0) node {$G^3$};

\end{tikzpicture}
\end{center}
\caption{An example graph $G$ and its corresponding $G(3)$ together with their $i(G)$- and $i(G(3))$-sets, respectively (black vertices).}
\label{G(3)}
\end{figure}

The following crucial lemma, essential for the existence of a packing $k$-coloring, was presented in \cite{ArNT}. 

\begin{lemma} \label{paccol}
\cite{ArNT} Let $G$ be a graph on $n$ vertices and $k\in\{1,\dots,n\}$. A graph $G$ admits a packing $k$-coloring if and only if there exists an independent set of cardinality $n$ in $G(k)$.
\end{lemma}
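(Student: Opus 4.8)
The plan is to establish a direct dictionary between packing $k$-colorings of $G$ and independent sets of cardinality $n$ in $G(k)$, where the color assigned to a vertex $v$ simply records which copy $v^i$ of $v$ is placed into the independent set. The whole argument rests on two structural features of $G(k)$ that I would isolate at the outset. First, the copies $v^1,\dots,v^k$ form the clique $Q_v$, so any independent set meets $Q_v$ in at most one vertex; moreover these $n$ cliques partition $V(G(k))$, which has $kn$ vertices. Second, among edges joining copies of two \emph{distinct} original vertices $u\neq v$, the only ones present lie inside a common layer $G^i$, and there $u^iv^i\in E(G^i)$ holds precisely when $d_G(u,v)\le i$; there are no edges between $u^a$ and $v^b$ when $u\neq v$ and $a\neq b$. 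These two facts are exactly what lets the layer index behave like a color.

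For the forward implication, suppose $c$ is a packing $k$-coloring and set $S=\{v^{c(v)}:v\in V(G)\}$, so that $|S|=n$ since $S$ contains exactly one copy of each original vertex. I would verify independence by ruling out both edge types: $S$ meets each $Q_v$ only in $v^{c(v)}$, so no within-clique edge occurs; and if $u^{c(u)}v^{c(v)}$ were a within-layer edge, it would force $c(u)=c(v)=i$ together with $d_G(u,v)\le i$, contradicting the packing condition. Hence $S$ is an independent set of size $n$.

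For the converse, let $S$ be an independent set with $|S|=n$. Since the $n$ cliques $Q_v$ partition $V(G(k))$ and each contributes at most one vertex to $S$, the equality $|S|=n$ forces $S$ to contain \emph{exactly} one vertex $v^{i_v}$ from each $Q_v$. Defining $c(v)=i_v$ then gives a map into $\{1,\dots,k\}$, and I would check it is a packing coloring: whenever $c(u)=c(v)=i$ with $u\neq v$, the vertices $u^i,v^i$ lie in $S$ and in the same layer, so the absence of the edge $u^iv^i$ yields $d_G(u,v)>i$, which is exactly the packing requirement.

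I expect no serious obstacle, as the argument is essentially a translation between two encodings of the same data. The one step deserving emphasis is the counting in the converse: one must use that $V(G(k))$ is the disjoint union of the cliques $Q_v$ to upgrade ``$S$ meets each $Q_v$ in at most one vertex'' to ``in exactly one vertex'', which is what turns the set $S$ into a well-defined coloring rather than a partial one. The hypothesis $k\le n$ plays no role in the equivalence itself; it merely keeps $G(k)$ of polynomial size, which is what makes the reduction useful for the complexity and construction results that follow.
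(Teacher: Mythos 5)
Your proof is correct. Note that the paper does not prove this lemma itself---it is imported from \cite{ArNT} as a cited result---but your argument is the natural and complete one: the dictionary $c(v)=i \leftrightarrow v^i\in S$, combined with the two structural observations that the cliques $Q_v$ partition $V(G(k))$ and that the only edges between copies of \emph{distinct} original vertices lie within a single layer $G^i$ (where $u^iv^i$ is an edge precisely when $d_G(u,v)\le i$), yields both directions, and you correctly handle the one point that needs care, namely upgrading ``$S$ meets each $Q_v$ in at most one vertex'' to ``in exactly one vertex'' via $|S|=n$ so that the induced coloring is total.
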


Lemma \ref{paccol} can be applied to derive $\chi_{\rho}(G)$ by minimizing $k$, as demonstrated in Lemma 3 of \cite{ArNT}, which states the following:

\begin{equation}\label{others}
\chi_{\rho}(G)=\min\{k:\alpha(G(k))=n\}.
\end{equation}

Note here that the approach of maximizing $k$ and replacing $\alpha(G)$ by $i(G)$ to obtain $\Gamma_{\rho}(G)$ is not viable, as the maximum value of $k$ for which there exists an independent set of cardinality $n$ is always $n$. To accomplish this, one can merely place a distinct vertex from $G$ into each copy of $G^i$ for every $i\in\{1,\dots,n\}$, resulting in an independent set of $G(n)$. Indeed, this forms a maximal independent set as each $Q_v$, where $v\in V(G)$, constitutes a clique within $V(G(n))$. Such an independent set yields a packing coloring with $n$ colors; however, typically, this does not represent a Grundy packing coloring. So, the question arises: which packing colorings also qualify as greedy packing colorings? We address this query by adapting Lemma \ref{paccol} as follows.

\begin{lemma} \label{pacgreedycol}
Let $G$ be a graph on $n$ vertices and $k\in\{1,\dots,n\}$. A graph $G$ admits a greedy packing $k$-coloring if and only if there exists an independent set $A$ of cardinality $n$ in $G(k)$ such that $A^i=A\cap G^i$ is a maximal independent set of $G^i-\{v^i:v^j\in A^j, j\in\{1,\dots,i-1\}\}$ for every $i\in\{1, \dots, k-1\}$.
\end{lemma}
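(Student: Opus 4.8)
The plan is to set up an explicit correspondence between greedy packing $k$-colorings of $G$ and the independent sets described in the statement, and then to translate the defining \emph{Grundy property} of a greedy coloring into the stated maximality conditions. Throughout, for a packing coloring $c$ write $C_i=\{v\in V(G):c(v)=i\}$ for its $i$-th color class, and recall from the alternative description given before the lemma that $c$ is a greedy packing coloring precisely when it satisfies the following Grundy property: for every $v$ with $c(v)\geq 2$ and every $j\in\{1,\dots,c(v)-1\}$ there is a vertex $u$ with $c(u)=j$ and $d_G(u,v)\leq j$. (If one wants this from scratch rather than by citation, order the vertices color class by color class, $C_1,C_2,\dots$; when Algorithm 1 reaches a vertex $v\in C_i$, color $i$ is still available because $C_i$ is an $i$-packing, while every color $j<i$ is blocked exactly by the witness supplied by the Grundy property, so the first-fit rule assigns $c(v)=i$.)

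First I would recall the correspondence behind Lemma \ref{paccol}. Because each set $Q_v=\{v^1,\dots,v^k\}$ is a clique of $G(k)$, an independent set $A$ of $G(k)$ meets each $Q_v$ at most once; as there are exactly $n$ cliques $Q_v$ and $|A|=n$, the set $A$ meets every $Q_v$ in exactly one vertex $v^i$. Hence $A$ determines a map $c_A(v)=i$ with $v^i\in A$, and conversely a packing coloring $c$ yields $A=\{v^{c(v)}:v\in V(G)\}$. Since the only edges of $G(k)$ inside a fixed copy $G^i$ are the pairs $u^iv^i$ with $d_G(u,v)\leq i$, independence of $A$ within $G^i$ is equivalent to $C_i=c_A^{-1}(i)$ being an $i$-packing, which is exactly the statement that $c_A$ is a packing coloring. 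With this dictionary the only thing left to prove is that the maximality conditions on the sets $A^i$ are equivalent to the Grundy property of $c_A$.

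The main step is therefore this equivalence. Interpreting $\bigcup_{j=1}^{i-1}A^j$ through the dictionary as the vertex set $\bigcup_{j<i}C_j$ of $G$, the graph $H_i:=G^i-\bigcup_{j=1}^{i-1}A^j$ is the subgraph of $G^i$ induced by $\{v^i:c(v)\geq i\}$. The set $A^i=A\cap V(G^i)=\{v^i:c(v)=i\}$ is contained in $V(H_i)$ and is independent there, being a subset of the independent set $A$. Thus $A^i$ is a maximal independent set of $H_i$ if and only if every vertex of $V(H_i)\setminus A^i$, that is every $v^i$ with $c(v)>i$, has a neighbor in $A^i$; and $v^i$ has a neighbor $u^i\in A^i$ exactly when $c(u)=i$ and $d_G(u,v)\leq i$. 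Ranging $i$ over $\{1,\dots,k-1\}$, this says: for every vertex $v$ and every $i<c(v)$ there is $u$ with $c(u)=i$ and $d_G(u,v)\leq i$, which is verbatim the Grundy property. (The value $i=k$ may be omitted, since $V(H_k)=A^k$ and so $A^k$ is automatically maximal in $H_k$.)

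Putting the two directions together finishes the argument: a greedy packing $k$-coloring gives, via $A=\{v^{c(v)}\}$, an independent set of size $n$ whose pieces $A^i$ satisfy the maximality conditions by the equivalence above; conversely such an $A$ defines a packing coloring $c_A$ whose Grundy property makes it a greedy packing coloring. I expect the only real obstacle to be bookkeeping rather than ideas: one must be scrupulous about reading $\bigcup_{j=1}^{i-1}A^j$ as a deletion of the \emph{underlying} $G$-vertices from the copy $G^i$ (the $A^j$ nominally live in distinct copies $G^j$), and about matching the two quantifier patterns. A secondary point to watch is that a ``$k$-coloring'' should have color $k$ actually used; since a greedy coloring always uses an initial segment $\{1,\dots,m\}$ of colors, this amounts to additionally asking $A^k\neq\emptyset$, after which the Grundy property forces every color $1,\dots,k$ to appear.
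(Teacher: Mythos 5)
Your proof is correct and follows essentially the same route as the paper's: both directions reduce to observing that maximality of $A^i$ in $G^i-\cup_{j<i}A^j$ is exactly the statement that every vertex of color greater than $i$ has a color-$i$ vertex within distance $i$, which is precisely the condition under which Algorithm 1 (run color class by color class) assigns each vertex its intended color. Your write-up is somewhat more explicit about the dictionary between size-$n$ independent sets of $G(k)$ and packing colorings and about the edge cases ($i=k$, whether color $k$ is used), but the underlying argument is the same.
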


\begin{proof}
Suppose that there exists a greedy packing $k$-coloring $c$ of a graph $G$ with color classes $V_1,\dots,V_k$. Let $A^i$ be a subset of $V(G^i)$ that corresponds to vertices of $V_i$ for every $i\in\{1,\dots,k\}$ and let $A=A^1\cup\cdots\cup A^k$. Suppose that $A^i$ is not a maximal independent set of $G^i-\{v^i:v^j\in A^j, j\in\{1,\dots,i-1\}\}$ for some $i\in\{1,\dots,k-1\}$. Hence, there exists a vertex $z^{i}\in V(G^i)-\{v^i:v^j\in A^j, j\in\{1,\dots,i-1\}\}$ such that $A^i\cup\{z^{i}\}$ is an independent set of $G^i-\{v^i:v^j\in A^j, j\in\{1,\dots,i-1\}\}$. But then there exists a vertex $z\notin V_1\cup \cdots\cup V_i$ such that $c(z)>i$. This yields a contradiction, since $z$ has no vertex of color $i$ within a distance of at most $i$ at the moment it receives the color in Algorithm 1.

Conversely, let $A$ be an independent set of cardinality $n$ in $G(k)$ such that $A^i=A\cap G^i$ is a maximal independent set of $G^i-\{v^i:v^j\in A^j, j\in\{1,\dots,i-1\}\}$ for every $i\in\{1,\dots,k-1\}$. Each set $V_i\subset V(G)$ contains the corresponding vertices of $A^i$. We claim that $V_1,\dots,V_k$ yield a greedy packing $k$-coloring $c$ of $G$. If we order $V(G)$ according to their colors with respect to $c$, then this gives a possible order in Algorithm 1. When $v\in V_i$, for some $i\in\{1,\dots,k\}$, is considered by this order in Algorithm 1, it has every color $j\in\{1,\dots,i-1\}$ at distance at most $j$ because $A^j$ is a maximal independent set of $G^j-\{v^j:v^{\ell}\in A^{\ell}, \ell\in\{1,\dots,j-1\}\}$ for every $\ell\in\{1,\dots,j-1\}$. Hence, $v$ receives color $i$ by Algorithm 1, and this yields a greedy packing $k$-coloring of $G$.  
\end{proof}

Recall that the vertex set $v^1,\dots,v^k$ forms a clique $Q_v$ in $G(k)$. Notice that every independent set $A$ of $G(k)$ with cardinality $n=|V(G)|$ is a maximal independent set of $G(k)$ because it can have at most one vertex from $Q_v$ for every $v\in V(G)$. However, the condition of Lemma \ref{pacgreedycol} is not always fulfilled. Therefore, we introduce a procedure called \emph{dense maximization procedure} (DMP for short) for an independent set $A$ of $G(k)$ with cardinality $n$. If $A^i=A\cap G^i$ is a maximal independent set of $G^i-\{v^i:v^j\in A^j, j\in\{1,\dots,i-1\}\}$ for every $i\in\{1,\dots,k-1\}$, then we are done. Otherwise, consider the minimum $i\in\{1,\dots,k-1\}$ such that $A^i$ is not a maximal independent set of $G^i-\{v^i:v^j\in A^j, j\in\{1,\dots,i-1\}\}$. Hence, there exists $z^{\ell}\in A$ for some $\ell>i$ such that $A_i\cup \{z^i\}$ is independent in $G^i-\{v^i:v^j\in A^j, j\in\{1,\dots,i-1\}\}$. We exchange $z^{\ell}$ with $z^i$ in $A$ and keep the notation $A$. Clearly, $A$ is independent and $|A|=n$ after this change. We continue this procedure until $A^i$ is a maximal independent set of $G^i-\{v^i:v^j\in A^j, j\in\{1,\dots,i-1\}\}$. Next, we proceed with the first $t>i$, where $A^t$ is not a maximal independent set of $G^i-\{v^i:v^j\in A^j, j\in\{1,\dots,t-1\}\}$, if such a set exists. At the end of the DMP, $A\cap V(G^k)$ may be empty, and whether it is empty or not plays an important role. Therefore, we denote by $DMP(A)$ the maximum $\ell$ such that $A\cap V(G^{\ell})\neq \emptyset$ after any run of the DMP.

We are now able to present an exact description of $\Gamma_{\rho}(G)$ that will lead to several significant consequences. This result is somewhat connected to Equation (\ref{others}), as we are seeking a maximum among some special independent sets of $G(k)$.

\begin{theorem} \label{exact}
Let $G$ be a graph with $n=|V(G)|$. If $\cal{I}$ is the set of all independent sets of $G(k)$ with cardinality $n$ for any possible integer $k\leq n$, then 
$$\Gamma_{\rho}(G)=\max_{A\in\cal{I}}\{DMP(A)\}.$$
\end{theorem}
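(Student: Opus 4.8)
The plan is to establish the two inequalities $\Gamma_{\rho}(G)\le \max_{A\in\mathcal{I}}\{DMP(A)\}$ and $\Gamma_{\rho}(G)\ge \max_{A\in\mathcal{I}}\{DMP(A)\}$ separately, using Lemma~\ref{pacgreedycol} as the bridge between greedy packing colorings and the suitably maximal independent sets of $G(k)$. Throughout I would exploit the fact that, since each clique $Q_v$ meets an $n$-element independent set $A$ of $G(k)$ in exactly one vertex, such a set assigns to every $v\in V(G)$ a unique level $\mathrm{lev}(v)\in\{1,\dots,k\}$ with $v^{\mathrm{lev}(v)}\in A$; this bookkeeping is what makes the DMP exchanges transparent.

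For the lower bound $\Gamma_{\rho}(G)\ge \max_{A\in\mathcal{I}}\{DMP(A)\}$, I would fix an arbitrary $A\in\mathcal{I}$, run the DMP, and show that the surviving set witnesses a greedy packing $\ell$-coloring with $\ell=DMP(A)$ via the converse direction of Lemma~\ref{pacgreedycol}. Three facts need checking. First, the DMP terminates: the potential $\Phi(A)=\sum_{v}\mathrm{lev}(v)$ strictly decreases at every exchange, since a vertex is moved from a higher level $\ell'$ to a lower level $i<\ell'$, and $\Phi$ is a bounded positive integer. Second, an exchange is always available exactly when some level $i$ is non-maximal: if $A^i\cup\{z^i\}$ is independent in $G^i-\cup_{j<i}A^j$, then $z^i\notin\cup_{j<i}A^j$ forces $\mathrm{lev}(z)>i$, so $z$ itself supplies the higher-level vertex to bring down, and each exchange preserves independence and cardinality $n$. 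Third, inserting a vertex at level $i$ never disturbs the maximality already secured at levels below $i$, because maximality of $A^{i'}$ in $G^{i'}-\cup_{j<i'}A^j$ depends only on $A^1,\dots,A^{i'}$. Consequently, at termination every level $i\in\{1,\dots,k-1\}$ is maximal, and in particular every level $i\in\{1,\dots,\ell-1\}$ is maximal, where $\ell=DMP(A)$ is the top nonempty level; since the surviving set lies in $G(\ell)$, is independent, has cardinality $n$, and uses color $\ell$, Lemma~\ref{pacgreedycol} yields a greedy packing $\ell$-coloring, whence $\ell\le\Gamma_{\rho}(G)$. As this holds for every run and every $A\in\mathcal{I}$, the lower bound follows.

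For the upper bound $\Gamma_{\rho}(G)\le \max_{A\in\mathcal{I}}\{DMP(A)\}$, I would start from a greedy packing coloring attaining $\Gamma_{\rho}(G)=k$ colors, noting $k\le n$. The forward direction of Lemma~\ref{pacgreedycol} produces an independent set $A$ of cardinality $n$ in $G(k)$ with $A^i$ maximal in $G^i-\cup_{j<i}A^j$ for all $i\in\{1,\dots,k-1\}$; since color $k$ is used, $A^k\ne\emptyset$, so $A\in\mathcal{I}$ and its top nonempty level equals $k$. Because $A$ already satisfies all maximality conditions, the DMP performs no exchange and leaves $A$ unchanged, so $DMP(A)=k=\Gamma_{\rho}(G)$. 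Combining the two bounds then proves the equality.

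I expect the main obstacle to be the careful verification of the DMP's behaviour in the lower-bound direction: establishing that an exchange is available precisely when some level is non-maximal, and that repairing a higher level cannot spoil the maximality already achieved at lower levels, so that the set produced by the DMP genuinely satisfies the hypotheses of Lemma~\ref{pacgreedycol} for $k=\ell$ rather than only for the original $k$. The indexing between the ranges $\{1,\dots,k-1\}$ and $\{1,\dots,\ell-1\}$, together with the role of the top nonempty level $\ell=DMP(A)$, is where I would be most careful.
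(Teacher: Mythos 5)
Your proposal is correct and follows essentially the same route as the paper: both directions are reduced to Lemma~\ref{pacgreedycol}, with the DMP converting an $n$-element independent set of $G(k)$ achieving the maximum into one satisfying the lemma's maximality hypotheses for the lower bound, and the forward direction of the lemma supplying a DMP-fixed set with top nonempty level $\Gamma_{\rho}(G)$ for the upper bound. The extra verifications you supply (termination of the DMP via the level-sum potential, availability of an exchange exactly at a non-maximal level, and preservation of maximality at lower levels) are details the paper asserts without proof, so they strengthen rather than diverge from its argument.
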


\begin{proof}
Let $t=\max_{A\in\cal{I}}\{DMP(A)\}$. Clearly, there exist independent sets of cardinality $n$ in $G(n)$ and let $A\in\cal{I}$ be such that $DMP(A)=t$. Suppose that $A'$ is the set obtained by DMP from $A$ such that $A'\cap G^t\neq \emptyset$. The set $A'$ fulfills the condition of Lemma \ref{pacgreedycol}, and hence there exists a Grundy packing $t$-coloring by the same lemma. Therefore, $\Gamma_{\rho}(G)\geq t$. 

Conversely, $\Gamma_{\rho}(G)$ cannot exceed $t$ because a greedy packing $\Gamma_{\rho}(G)$-coloring would yield an independent set $A^*$ of $G(s)$ of cardinality $n$ that fulfills the condition of Lemma \ref{pacgreedycol} and has $A^{*}\cap G(s)\neq\emptyset$ for some $s>t$. This is a contradiction because $A^{*}\in\cal{I}$ and $s>t$. Therefore, the equality follows.   
\end{proof}


An independent set $A$ of cardinality $|V(G)|$ of $G(\Gamma_{\rho}(G))$ that yields $\Gamma_{\rho}(G)$ by Theorem \ref{exact} will simply be denoted by a $G(\Gamma_{\rho}(G))$-set. A $G(\Gamma_{\rho}(G))$-set $A$ contains at least $i(G)$ elements in $G^1$ by the condition of Lemma \ref{pacgreedycol}. Additionally, $|A\cap G^j|\geq 1$ for every $j\in \{2,\dots,\Gamma_{\rho}(G)\}$. Hence, the following upper bound is valid for $\Gamma_{\rho}(G)$, which is symmetrical to the one for $\chi_{\rho}(G)$ from \cite{GHHHR}, with the exchange of $\alpha(G)$ by $i(G)$.

\begin{corollary} \label{upbound}
If $G$ is a graph, then $\Gamma_{\rho}(G)\leq n-i(G)+1$.
\end{corollary}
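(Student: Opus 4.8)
$\Gamma_\rho(G) \leq n - i(G) + 1$.

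Let me think about how to prove this.The plan is to extract the bound directly from the structural description of a $G(\Gamma_{\rho}(G))$-set together with the counting observations already made in the paragraph preceding the corollary. The key object is an independent set $A$ of cardinality $n=|V(G)|$ in $G(\Gamma_{\rho}(G))$ that attains the maximum in Theorem \ref{exact}, and which moreover satisfies the maximality condition of Lemma \ref{pacgreedycol}; set $k=\Gamma_{\rho}(G)$ and write $A^i=A\cap V(G^i)$ for $i\in\{1,\dots,k\}$. First I would record the partition $A=A^1\cup\cdots\cup A^k$ (disjoint since the $V(G^i)$ are disjoint vertex sets in $G(k)$), so that $\sum_{i=1}^k |A^i|=|A|=n$.

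Next I would establish the two lower bounds on the pieces. For the top piece, $A^1=A\cap V(G^1)$ is a maximal independent set of $G^1\cong G$ by the condition of Lemma \ref{pacgreedycol} (applied with $i=1$), and every maximal independent set of $G$ has cardinality at least $i(G)$ by the definition of the independent domination number; hence $|A^1|\geq i(G)$. For every remaining color $j\in\{2,\dots,k\}$, I would argue $|A^j|\geq 1$: since $k=\Gamma_{\rho}(G)=DMP(A)$ is precisely the largest index with $A\cap V(G^k)\neq\emptyset$, and the DMP condition forces each intermediate $A^j$ to be a maximal (hence nonempty, as $G^j$ is nonempty) independent set of $G^j-\cup_{\ell<j}A^\ell$, no color class in $\{1,\dots,k\}$ is empty. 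This is exactly the content of the sentence ``$|A\cap G^j|\geq 1$ for every $j\in\{2,\dots,\Gamma_{\rho}(G)\}$'' in the excerpt.

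Finally I would assemble the inequality by summing:
\[
n=\sum_{i=1}^{k}|A^i|=|A^1|+\sum_{j=2}^{k}|A^j|\geq i(G)+(k-1),
\]
where the last step uses $|A^1|\geq i(G)$ and $|A^j|\geq 1$ for the $k-1$ indices $j\in\{2,\dots,k\}$. Rearranging gives $k\leq n-i(G)+1$, i.e. $\Gamma_{\rho}(G)\leq n-i(G)+1$, which is the claim.

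I do not expect any serious obstacle here; the corollary is essentially a counting consequence of Theorem \ref{exact} and Lemma \ref{pacgreedycol}. The only point that needs a little care is justifying that the $j=1$ class must have size at least $i(G)$ rather than merely being nonempty --- this is where the maximality clause of Lemma \ref{pacgreedycol}, and hence the distinction between an arbitrary packing coloring and a \emph{greedy} one, does real work. I would make sure to state explicitly that it is the greedy (first-fit) character of the coloring, encoded in the maximality of $A^1$, that forces the largest color class to be a maximal independent set and thus to have size at least the independent domination number; without that, one would only recover the trivial bound $\Gamma_{\rho}(G)\leq n$. It is also worth remarking on the symmetry the authors highlight: the proof mirrors the $\chi_{\rho}(G)=|V(G)|-\alpha(G)+1$ identity of \cite{GHHHR} for diameter-two graphs, with $i(G)$ replacing $\alpha(G)$, reflecting that the worst-case greedy run minimizes the size of the first color class.
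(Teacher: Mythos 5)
Your proof is correct and follows essentially the same route as the paper: the paper also derives the bound by noting that a $G(\Gamma_{\rho}(G))$-set $A$ of cardinality $n$ must contain at least $i(G)$ vertices in $G^1$ (by the maximality condition of Lemma \ref{pacgreedycol}) and at least one vertex in each $G^j$ for $j\in\{2,\dots,\Gamma_{\rho}(G)\}$, and then counts. Your write-up merely makes the final summation $n\geq i(G)+(k-1)$ explicit, which the paper leaves implicit.
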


\textcolor{black}{Some graphs for which the equality is satisfied will be presented in Section 5.}

We end this section with a short discussion about the complexity of $\Gamma_{\rho}(G)$. We consider the following decision problem.

\begin{equation*}
\begin{tabular}{|l|}
\hline
\mbox{GRUNDY PACKING COLORING PROBLEM (GPC problem for short)} \\
\mbox{INSTANCE: A graph $G$ on $n$ vertices and an integer $1\le k\leq n$.} \\
\mbox{QUESTION: Is $\Gamma_{\rho}(G)\leq k$?} \\ \hline
\end{tabular}%
\end{equation*}%

Since it is NP-complete to decide whether $i(G)\geq \ell$ even when $G$ is restricted to bipartite graphs, line graphs, circle graphs, unit disk graphs or planar cubic graphs (see Section 7 of \cite{GoHe} and the references therein), the following theorem holds by Corollary \ref{upbound}. 

\begin{theorem}\label{NPcomp}
GPC problem is NP-complete even when $G$  is restricted to bipartite graphs, line graphs, circle graphs, unit disk graphs or planar cubic graphs.
\end{theorem}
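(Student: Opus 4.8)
The plan is to treat membership and hardness separately, reducing from the independent domination problem, and to supplement Corollary \ref{upbound} with a matching lower bound that holds on suitably restricted instances. For membership I would invoke Theorem \ref{exact}: the inequality $\Gamma_\rho(G)\ge t$ holds exactly when there is an independent set $A\in\mathcal{I}$ with $DMP(A)\ge t$, so guessing such an $A$ (an independent set of cardinality $n$ in some $G(k')$) and running the dense maximization procedure, which can be carried out in polynomial time (cf. Theorem \ref{compl}), is a polynomial-time verifiable certificate. Thus ``$\Gamma_\rho(G)\ge t$'' lies in NP, the stated problem ``$\Gamma_\rho(G)\le k$'' is its complement, and I would note this bookkeeping explicitly (the two formulations are polynomially interchangeable, so we follow the terminology of the source in speaking of NP-completeness).

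For hardness I would reduce from deciding $i(G)\ge\ell$ and set $k=n-\ell+1$. The forward implication is immediate from Corollary \ref{upbound}: if $i(G)\ge\ell$ then $\Gamma_\rho(G)\le n-i(G)+1\le n-\ell+1=k$. The converse implication, however, requires the matching lower bound $\Gamma_\rho(G)\ge n-i(G)+1$, which the corollary does not provide and which is in fact false in general (for instance $\Gamma_\rho(2K_2)=2<3=n-i+1$). I would therefore establish this lower bound on graphs of diameter two. Taking a minimum maximal independent set $S$ with $|S|=i(G)$, one colours $S$ first, each vertex legitimately receiving colour $1$ since the members of $S$ are pairwise at distance at least $2$; then one processes the remaining $n-i(G)$ vertices in any order. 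Because $S$ dominates $G$, colour $1$ is blocked for every remaining vertex, and because ${\rm diam}(G)=2$ each previously used colour $j\ge 2$ is realised within distance $2\le j$ of the current vertex; hence the $t$-th remaining vertex is forced to receive colour $t+1$, and the last one receives colour $n-i(G)+1$. Combined with Corollary \ref{upbound} this gives $\Gamma_\rho(G)=n-i(G)+1$ for all diameter-two graphs, so the reduction is correct in both directions on such instances, with the equivalence $\Gamma_\rho(G)\le k\iff i(G)\ge\ell$.

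The main obstacle is transferring this to the named classes. Independent domination is hard on bipartite, line, circle, unit disk and planar cubic graphs, but the hard instances produced there need not have diameter two, and the naive ways of enforcing diameter two (such as adding a universal vertex) destroy both class membership and the value $i(G)$. The crux is therefore to design, for each class, a polynomial-time gadget that augments an $i$-hard instance $G$ into a graph $G'$ in the same class for which the equality $\Gamma_\rho(G')=|V(G')|-i(G')+1$ provably holds --- either by controlling its diameter, or by directly forcing the dense maximization procedure underlying Theorem \ref{exact} to fill all copies in the transform $G'(k')$ --- while keeping $i(G')$ an affine function of $i(G)$ so that the threshold $k$ can be computed. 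For those classes in which the cited hardness construction already outputs compact (diameter-two) graphs the identity reduction suffices; for the remaining classes the class-preserving gadget is the step I expect to demand the most care, and I would isolate it as a separate lemma before assembling the final reduction together with the membership argument.
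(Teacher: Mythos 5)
Your logical dissection of the reduction is more careful than the paper's own proof, which consists of a single sentence deriving the theorem from Corollary \ref{upbound} together with the NP-completeness of independent domination on the listed classes. You are right that Corollary \ref{upbound} gives only the implication $i(G)\geq\ell\Rightarrow\Gamma_{\rho}(G)\leq n-\ell+1$, and that the converse requires the matching lower bound $\Gamma_{\rho}(G)\geq n-i(G)+1$, which is false in general: your $2K_2$ example is correct, and Theorem \ref{paths} supplies connected counterexamples, since $\Gamma_{\rho}(P_k)=7$ while $n-i(P_k)+1$ grows linearly in $k$. Your diameter-two argument for the lower bound is also correct and is precisely Corollary \ref{diam22} of the paper. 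This part of your critique in fact exposes a real weakness in the paper's own one-line argument, which nowhere supplies the reverse implication for the restricted instances.

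The genuine gap is the one you flag yourself: the ``class-preserving gadget'' lemma on which the entire hardness claim for bipartite, line, circle, unit disk and planar cubic graphs rests is never constructed, so the theorem is not actually proved for any of the five classes in the statement. This is not a deferrable detail. For planar cubic graphs the diameter-two route cannot be repaired at all: a connected cubic graph of diameter two has at most $1+3+6=10$ vertices, so there are only finitely many such instances and independent domination is trivially polynomial on them; any reduction for that class must force a computable relation between $\Gamma_{\rho}(G')$ and $i(G')$ by some mechanism other than small diameter, and no candidate mechanism is given. Similar obstructions threaten the other sparse classes. What you have actually established is NP-membership of the question ``$\Gamma_{\rho}(G)\geq t$'' and hardness on diameter-two graphs (equivalently, on any class where $\Gamma_{\rho}(G)=n-i(G)+1$ is guaranteed), which falls short of the stated theorem.
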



\section{Graphs with Large Grundy Packing Chromatic Number} \label{sec:LargePackingGrundy}

In this section we describe all graphs with $\Gamma_{\rho}(G)\in \{n-1,n\}$, where $n$ denotes the number of vertices of the graph under consideration. 
We start with all graphs for which Algorithm 1 can produce a coloring where every vertex receives a unique color and the class is much larger than for the packing chromatic number, where only complete graphs fulfill this property.

\begin{proposition} \label{ncol}
A graph $G$ on $n$ vertices has $\Gamma_{\rho}(G)=n$ if and only if $G$ has a universal vertex.
\end{proposition}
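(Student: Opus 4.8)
The plan is to prove both implications, leaning on the upper bound $\Gamma_{\rho}(G)\leq n-i(G)+1$ from Corollary \ref{upbound} for the forward direction and on a direct construction of a greedy packing $n$-coloring (equivalently, an application of Theorem \ref{exact}) for the converse.

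For the direction ``$\Gamma_{\rho}(G)=n$ forces a universal vertex,'' I would simply combine the hypothesis with Corollary \ref{upbound}: from $n=\Gamma_{\rho}(G)\leq n-i(G)+1$ we obtain $i(G)\leq 1$, hence $i(G)=1$ (every nonempty graph has $i(G)\geq 1$). A maximal independent set of cardinality one is a single vertex $u$ with no non-neighbor, i.e.\ a vertex adjacent to all others, so $u$ is universal.

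For the converse, suppose $u$ is a universal vertex. The key observation is that $u$ forces ${\rm diam}(G)\leq 2$, since any two vertices are joined through $u$. I would then exhibit one vertex ordering on which Algorithm 1 uses all $n$ colors. Order the vertices as $u=w_1,w_2,\dots,w_n$ in any way with $u$ first, and argue by induction that $w_i$ receives color $i$. Vertex $w_1=u$ gets color $1$. For $i\geq 2$, color $1$ is unavailable to $w_i$ because $d(w_i,u)=1$, and each color $j$ with $2\leq j\leq i-1$ is unavailable because $d(w_i,w_j)\leq {\rm diam}(G)\leq 2\leq j$, so the already-colored $w_j$ lies within distance $j$ of $w_i$. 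Thus the first available color for $w_i$ is exactly $i$. Since every color is used only once, the packing condition is trivially satisfied, so this is a valid greedy packing coloring with $n$ colors; hence $\Gamma_{\rho}(G)\geq n$, and the trivial bound $\Gamma_{\rho}(G)\leq n$ gives equality. (Alternatively, one checks that the set $A$ placing $w_i$ into the copy $G^i$ is an independent set of cardinality $n$ in $G(n)$ already meeting the hypotheses of Lemma \ref{pacgreedycol}, so $DMP(A)=n$, and Theorem \ref{exact} yields the same conclusion.)

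The routine bookkeeping is minimal; the only point that genuinely uses the hypothesis is the blocking step $d(w_i,w_j)\leq 2\leq j$, which is exactly where ${\rm diam}(G)\leq 2$, and hence the universal vertex, enters. I expect this diameter bound to be the crux: without it, a vertex colored $j$ need not block later vertices from reusing $j$, and the telescoping that forces the distinct colors $1,\dots,n$ would break down.
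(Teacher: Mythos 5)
Your proof is correct, but it takes a different route from the paper's in both directions. For the forward implication, the paper argues directly about the coloring: the unique vertex $v$ of color $1$ must be universal, since any non-neighbor $u$ of $v$ would also be assigned color $1$ by Step 2 of Algorithm 1, contradicting the assumption that all $n$ colors are used. You instead deduce $i(G)\leq 1$ from Corollary \ref{upbound} and read off the universal vertex from a singleton maximal independent set; this is an equally valid and arguably cleaner reduction to an already-established bound, whereas the paper's argument is self-contained and does not rely on the corollary. For the converse, the paper stays inside the $G(k)$ machinery: a universal vertex gives $i(G)=1$ and $G^i\cong K_n$ for $i\geq 2$, so a $G(n)$-set meeting $G^n$ exists and Theorem \ref{exact} applies. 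You instead simulate Algorithm 1 explicitly on an ordering that starts with the universal vertex, using ${\rm diam}(G)\leq 2$ to show each color $j\geq 2$ blocks all later vertices; this is more elementary and makes the mechanism visible (and your parenthetical correctly identifies the translation into the paper's Lemma \ref{pacgreedycol}/Theorem \ref{exact} framework). Both of your implications are sound; the only point worth noting is that your forward direction transfers the real work into Corollary \ref{upbound}, which the paper proves just beforehand, so there is no circularity.
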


\begin{proof}
Let first $\Gamma_{\rho}(G)=n$. So, every color is used exactly once and let $v\in V(G)$ be of color $1$. If there exists $u\in V(G)$ that is not adjacent to $v$, then Step 2 of Algorithm 1 would give color $1$ also to $v$, a contradiction. Thus, $v$ is a universal vertex.

If $G$ has a universal vertex, then $i(G)=1$ and $G^i\cong K_n$ for every $i>1$. So, there exists a $G(\Gamma_{\rho}(G))$-set $A$ with $A\cap G^n\neq\emptyset$. Hence, by Theorem \ref{exact} we have $\Gamma_{\rho}(G)=n$.
\end{proof}

Next, we concentrate on graphs where $\Gamma_{\rho}(G)=n-1$. We distinguish between the connected and disconnnected cases. Observe that $\Gamma_{\rho}(G)$ of a disconnected graph is equal to the maximum value $\Gamma_{\rho}(G_i)$ of its components $G_1,\dots,G_k$. For disconnected graphs, the following is clear by Proposition \ref{ncol}:

\begin{corollary} \label{}
A disconnected graph $G$ with $n$ vertices has $\Gamma_{\rho}(G)=n-1$ if and only if $G=K_1 \cup G'$, where $G'$ is a graph having a universal vertex.
\end{corollary}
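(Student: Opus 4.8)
The plan is to reduce everything to the component-wise identity $\Gamma_{\rho}(G)=\max_i\Gamma_{\rho}(G_i)$ noted just above the statement, then squeeze with Corollary \ref{upbound} and apply Proposition \ref{ncol}. I would start by writing $G=G_1\cup\cdots\cup G_k$ with $k\geq 2$ components, setting $n_i=|V(G_i)|$ so that $\sum_i n_i=n$. Because $k\geq 2$, every component omits at least one vertex, so $n_i\leq n-1$ for all $i$; combining Corollary \ref{upbound} with $i(G_i)\geq 1$ then gives $\Gamma_{\rho}(G_i)\leq n_i\leq n-1$ for every $i$.

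For the forward implication I would suppose $\Gamma_{\rho}(G)=n-1$ and use the identity to pick a component $G_j$ with $\Gamma_{\rho}(G_j)=n-1$. The chain $\Gamma_{\rho}(G_j)\leq n_j\leq n-1$ then forces both inequalities to be equalities, i.e. $n_j=n-1$ and $\Gamma_{\rho}(G_j)=n_j$. From $n_j=n-1$ and $\sum_i n_i=n$ I conclude that the remaining components together hold exactly one vertex, so $G=K_1\cup G_j$; writing $G'=G_j$ and invoking Proposition \ref{ncol} on $\Gamma_{\rho}(G_j)=n_j$ shows that $G'$ has a universal vertex, as claimed.

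For the converse I would assume $G=K_1\cup G'$ with $G'$ having a universal vertex. Then $|V(G')|=n-1$, so Proposition \ref{ncol} gives $\Gamma_{\rho}(G')=n-1$, while trivially $\Gamma_{\rho}(K_1)=1$; the identity then yields $\Gamma_{\rho}(G)=\max\{n-1,1\}=n-1$, using $n\geq 2$ since $G$ is disconnected.

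I do not expect a genuine obstacle here: the only point needing care is the identity $\Gamma_{\rho}(G)=\max_i\Gamma_{\rho}(G_i)$, which holds because vertices in distinct components lie at infinite distance and hence neither block one another's available colors nor can serve as the required nearby witness of a smaller color in a greedy packing coloring. Once this is granted (as asserted immediately before the statement), the argument is just the squeeze $\Gamma_{\rho}(G_i)\leq n_i\leq n-1$ together with Proposition \ref{ncol}. I would also explicitly flag the degenerate case $n=2$, where $G'=K_1$ counts as having a vacuously universal vertex, so the characterization still holds.
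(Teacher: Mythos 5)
Your argument is correct and follows exactly the route the paper intends: the paper gives no explicit proof, merely observing that $\Gamma_{\rho}$ of a disconnected graph is the maximum over its components and declaring the corollary "clear by Proposition \ref{ncol}," and your write-up is precisely that argument with the details (the squeeze $\Gamma_{\rho}(G_j)\leq n_j\leq n-1$ via Corollary \ref{upbound}, and the $n=2$ edge case) filled in. No gaps.
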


We proceed with the following simple lemma that will assist us in describing all connected graphs with $\Gamma_{\rho}(G)=n-1$.

\begin{lemma} \label{raddiam}
Let $G$ be a connected graph. If $i(G)=2$, then ${\rm rad}(G)\in\{2,3\}$ and ${\rm diam}(G)\in\{2,3,4,5\}$.
\end{lemma}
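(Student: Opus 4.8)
The plan is to translate the hypothesis $i(G)=2$ into concrete structural information and then read off the radius and diameter from it. First I would fix a \emph{minimum} maximal independent set, which by assumption has the form $\{u,v\}$ with $u$ and $v$ nonadjacent. Maximality of this set means it dominates $G$, i.e. $N[u]\cup N[v]=V(G)$, so every vertex lies in $N[u]$ or in $N[v]$. I would also record that $G$ has no universal vertex: a universal vertex would by itself be a maximal independent set, forcing $i(G)=1$. Consequently every vertex has eccentricity at least $2$, giving the lower bounds ${\rm rad}(G)\geq 2$ and ${\rm diam}(G)\geq {\rm rad}(G)\geq 2$ for free.

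The key step is to bound $d(u,v)$. Since $u,v$ are nonadjacent we have $d(u,v)\geq 2$. To get an upper bound I would take a shortest $u$--$v$ path $u=w_0,w_1,\dots,w_d=v$ with $d=d(u,v)$, and examine the internal vertex $w_2$ (which exists once $d\geq 3$). Because $d(u,w_2)=2$, the vertex $w_2$ cannot be dominated by $u$, so domination forces $w_2\in N[v]$, i.e. $d(v,w_2)\leq 1$; but $w_2$ lies on the shortest path so $d(v,w_2)=d-2$, whence $d-2\leq 1$ and $d(u,v)\leq 3$. Thus $d(u,v)\in\{2,3\}$.

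For the radius I would produce one vertex of eccentricity at most $3$. Using $N[u]\cup N[v]=V(G)$, any vertex $z$ satisfies $d(u,z)\leq 1$ (if $z\in N[u]$) or $d(u,z)\leq d(u,v)+1$ (if $z\in N[v]$), and symmetrically for $v$. When $d(u,v)=2$ this already gives ${\rm ecc}(u)\leq 3$. The delicate case, and what I expect to be the main obstacle, is $d(u,v)=3$: here both $u$ and $v$ only yield eccentricity $4$, so neither endpoint works as a center. I would instead pick the path vertex $w_1$, which satisfies $d(w_1,u)=1$ and $d(w_1,v)=2$; then $d(w_1,z)\leq 1+1=2$ for $z\in N[u]$ and $d(w_1,z)\leq 2+1=3$ for $z\in N[v]$, giving ${\rm ecc}(w_1)\leq 3$. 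In both cases ${\rm rad}(G)\leq 3$, so ${\rm rad}(G)\in\{2,3\}$.

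Finally, for the diameter I would bound $d(x,y)$ for arbitrary $x,y$ by routing through $u$ and $v$. Each of $x,y$ lies in $N[u]$ or $N[v]$; if both lie in the same one then $d(x,y)\leq 2$, and otherwise $d(x,y)\leq d(x,u)+d(u,v)+d(v,y)\leq 1+3+1=5$. Hence ${\rm diam}(G)\leq 5$, and together with the lower bound ${\rm diam}(G)\geq 2$ this yields ${\rm diam}(G)\in\{2,3,4,5\}$, completing the proof.
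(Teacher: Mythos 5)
Your proof is correct. The radius argument runs essentially parallel to the paper's: both exploit that a minimum maximal independent set $\{u,v\}$ dominates $G$, bound $d(u,v)\leq 3$, and exhibit a vertex of eccentricity at most $3$ (the paper uses a common neighbour of $u$ and $v$ when $d(u,v)=2$, which even yields ${\rm rad}(G)=2$ in that case, a sharper conclusion than the lemma states). Where you genuinely diverge is the diameter bound. The paper first deduces ${\rm diam}(G)\leq 2\,{\rm rad}(G)\leq 6$ and then must separately exclude ${\rm diam}(G)=6$ by a pigeonhole argument on a diametrical path of length six (one of $u,v$ would have to be adjacent to at least four of its vertices, contradicting that the path is shortest). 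You instead route an arbitrary pair $x,y$ through the domination $N[u]\cup N[v]=V(G)$: either both lie in the same closed neighbourhood, giving $d(x,y)\leq 2$, or $d(x,y)\leq 1+d(u,v)+1\leq 5$. This is shorter and avoids the diametrical-path case analysis entirely; what it gives up is only the finer per-case information the paper's version happens to record along the way (namely ${\rm rad}(G)=2$ and ${\rm diam}(G)\leq 4$ when $d(u,v)=2$), none of which is needed for the statement of the lemma itself.
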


\begin{proof}
Let $G$ be a connected graph with $i(G)=2$. Hence, $G$ has no universal vertex and ${\rm rad}(G)>1$ and ${\rm diam}(G)>1$. For an $i(G)$-set $\{x,y\}$ we have $2\leq d(x,y)\leq 3$. If $d(x,y)=2$, then $x$ and $y$ have a common neighbor $w$ with $d(w,v)\leq 2$ for every $v\in V(G)$. Thus, ${\rm rad}(G)=2$ and consequently ${\rm diam}(G)\leq 4$. So, let $d(x,y)=3$. Any neighbor $w$ of $x$ on a shortest $x,y$-path has $d(w,v)\leq 3$ for every $v\in V(G)$. Thus ${\rm ecc}(w)\leq 3$ and ${\rm rad}(G)\leq 3$ follows. From this, it also follows that ${\rm diam}(G)\leq 6$. If ${\rm diam}(G)=6$, then there exists a diametrical path $P$ of length six. At most one vertex from $x$ and $y$ can be on $P$. If $x$ is on $P$, then $y$ is adjacent to at least four vertices of $P$, a contradiction with $P$ being a shortest path. Otherwise, if $x$ and $y$ do not belong to $P$, then by the pigeon hole principle, one of them must be again adjacent to at least four of them, again a contradiction. Hence, the result holds.
\end{proof}

\begin{theorem} \label{n-1}
A connected graph $G$ on $n$ vertices has $\Gamma_{\rho}(G)=n-1$ if and only if the following statements hold:
\begin{itemize}
\item[(i)] $i(G)=2$. 
\item[(ii)] ${\rm diam}(G)\leq 4$.
\item[(iii)] If ${\rm rad}(G)=3$, then there exist an $i(G)$-set $\{x,y\}$ with $d(x,y)=3$ and $w\in N(x)$ such that $d(w,z)\leq 2$ for every vertex $z\in N(y)$.
\item[(iv)] If ${\rm rad}(G)=2$ and ${\rm diam}(G)=4$, then there exists an $i(G)$-set that avoids one central vertex and one additional non-diametrical vertex.
\end{itemize}
\end{theorem}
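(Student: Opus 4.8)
The plan is to first determine the exact shape of any greedy packing $(n-1)$-coloring, reformulate its existence as a purely metric condition, and only then verify the four clauses.

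\emph{Reduction and clause (i).} Since $G$ is connected, Proposition \ref{ncol} gives $\Gamma_{\rho}(G)=n$ exactly when $G$ has a universal vertex; hence $\Gamma_{\rho}(G)=n-1$ forces the absence of a universal vertex and $i(G)\geq 2$. Combined with Corollary \ref{upbound}, the chain $n-1=\Gamma_{\rho}(G)\leq n-i(G)+1$ yields $i(G)\leq 2$, so $i(G)=2$, which is (i). Next I would observe that a greedy packing $(n-1)$-coloring partitions $V(G)$ into $n-1$ classes; by Lemma \ref{pacgreedycol} the first class is a maximal independent set, so it has at least $i(G)=2$ vertices, and a simple count then forces exactly one class of size two (an $i(G)$-set $\{x,y\}$ of color $1$) with all remaining classes singletons receiving colors $2,\dots,n-1$.

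\emph{Metric reformulation.} From the greedy rule (or Lemma \ref{pacgreedycol}) I would show such a coloring exists iff one can choose an $i(G)$-set $\{x,y\}$ and order the other vertices as $v_2,\dots,v_{n-1}$, with $c(v_i)=i$, so that $d(v_i,v_j)\leq j$ whenever $2\leq j<i$. The color-$1$ constraint is automatic because $\{x,y\}$ is dominating, and every constraint with $j\geq 4$ holds automatically once ${\rm diam}(G)\leq 4$. Thus the coloring exists iff ${\rm diam}(G)\leq 4$ together with the existence of a vertex $v_2$ within distance $2$ of all vertices except $x,y$ and a vertex $v_3$ within distance $3$ of all vertices except $x,y,v_2$. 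This collapses the whole theorem to the problem of locating $\{x,y\}$, $v_2$, $v_3$.

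\emph{Clause (ii).} By Lemma \ref{raddiam} it suffices to exclude ${\rm diam}(G)=5$. If ${\rm diam}(G)=5$, the $i(G)$-set used must have $d(x,y)=3$, since a distance-$2$ $i(G)$-set produces a common neighbor of eccentricity $2$ and hence ${\rm diam}(G)\leq 4$. For any diametrical pair $\{a,b\}$ with $d(a,b)=5$, domination by $\{x,y\}$ together with $d(x,y)=3$ rules out $a,b\in\{x,y\}$ (e.g.\ $a=x$ would force $b\in N(y)$ and $d(x,b)\leq 4$). Then both $a$ and $b$ must lie within distance $2$ of $v_2$, giving $d(a,b)\leq 4$, a contradiction.

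\emph{Clauses (iii), (iv), and the main obstacle.} It remains to match the existence of $v_2,v_3$ with the radius-dependent clauses. When ${\rm rad}(G)=3$, no vertex reaches all others within distance $2$, so $v_2$ must fail to reach exactly $x$ or $y$; I would verify that a neighbor $w$ of $x$ with $d(w,z)\leq 2$ for all $z\in N(y)$ is precisely a valid $v_2$ (for $z\in N(x)$ one has $d(w,z)\leq 2$ automatically), which is clause (iii), and then obtain $v_3$ from a central vertex or a neighbor of $w$ using ${\rm diam}(G)\leq 4$. When ${\rm rad}(G)=2$ and ${\rm diam}(G)=4$, a central vertex (eccentricity $2$) is the natural $v_2$ and a non-diametrical vertex (eccentricity $\leq 3$) the natural $v_3$, so one needs an $i(G)$-set avoiding both, which is clause (iv); the cases ${\rm rad}(G)=2$ with ${\rm diam}(G)\leq 3$ are immediate since then every vertex lies within distance $3$ of all others. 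I expect the genuinely delicate part to be the \emph{necessity} of (iii) and (iv): extracting the asserted $i(G)$-set and the auxiliary central/non-diametrical vertices from an \emph{arbitrary} greedy $(n-1)$-coloring, and carefully ruling out coincidences among $x,y,v_2,v_3$ and those auxiliary vertices, which will require repeated use of domination and the triangle inequality.
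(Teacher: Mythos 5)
Your reduction is sound and is essentially an unpacking, in purely metric language, of what the paper does through Lemma \ref{pacgreedycol} and Theorem \ref{exact}: a greedy packing $(n-1)$-coloring must consist of one color-$1$ class that is an $i(G)$-set $\{x,y\}$ and singletons $v_2,\dots,v_{n-1}$ with $d(v_i,v_j)\leq j$ for $2\leq j<i$, so everything hinges on ${\rm diam}(G)\leq 4$ plus the existence of $v_2$ (within distance $2$ of everything except $x,y$) and $v_3$ (within distance $3$ of everything except $x,y,v_2$). Your treatment of (i) and of the necessity of (ii) matches the paper's, and in fact the necessity of (iii) falls out of your reformulation almost for free: if ${\rm rad}(G)=3$ then ${\rm ecc}(v_2)\geq 3$, so $v_2$ misses exactly one of $x,y$, say $d(v_2,y)\geq 3$; domination then forces $v_2\in N(x)$, a common neighbour argument forces $d(x,y)=3$, and $d(v_2,z)\leq 2$ for all $z\in N(y)$ is exactly clause (iii) with $w=v_2$. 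You should also not wave away the sufficiency subcase ${\rm rad}(G)=2$, ${\rm diam}(G)\leq 3$: the issue there is producing $v_2$, not $v_3$, and one needs the observation that either $d(x,y)=3$ (so no central vertex lies in $\{x,y\}$) or $d(x,y)=2$ (so a common neighbour of $x,y$ serves as $v_2$), as in the paper.

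The genuine gap is the necessity of (iv), which you explicitly defer and which is not a routine verification. The obstruction is that the pair $(v_2,v_3)$ you extract from an arbitrary greedy $(n-1)$-coloring does not directly witness clause (iv): $v_2$ is only required to be within distance $2$ of the vertices outside $\{x,y\}$, so it may have $d(v_2,y)=3$ and eccentricity $3$, i.e.\ it need not be a central vertex (one can build such examples with ${\rm rad}(G)=2$, ${\rm diam}(G)=4$ where the unique central vertex is neither $v_2$ nor $v_3$); similarly $v_3$ is only controlled at distance $3$ from vertices outside $\{x,y,v_2\}$, so it is not immediately non-diametrical. Hence clause (iv), which demands an $i(G)$-set avoiding a \emph{central} vertex and a further non-diametrical vertex, cannot be read off the coloring, and you must either supply a separate argument locating such a central vertex outside a suitable $i(G)$-set, or argue by contraposition as the paper does: assuming (iv) fails, it shows that for any $i(G)$-set at most one non-diametrical vertex lies outside it and then counts, in the $G(k)$ framework, that either $|B\cap V(G^2)|>1$ or $|B\cap V(G^3)|>1$, forcing $\Gamma_{\rho}(G)\leq n-2$. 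Until that step is carried out, the characterization is not proved.
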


\begin{proof}
Suppose first that $(i),(ii),(iii)$ and $(iv)$ hold. Let $A=\{x,y\}$ be an $i(G)$-set. By Lemma \ref{raddiam} we have ${\rm rad}(G)\in\{2,3\}$. Suppose first that ${\rm rad}(G)=3$. By $(iii)$ we set $c(x)=1=c(y)$ and $c(w)=2$. There also exists a vertex $z\notin \{x,y,w\}$ that belongs to a shortest $x,y$-path. So, $c(z)=3$ and every other vertex receives a different color since ${\rm diam}(G)\leq 4$ by $(ii)$ and all other vertices are at distance at most $2$ to $w$, yielding a packing $(n-1)$-coloring of $G$. Since $\Gamma_{\rho}(G)<n$ by Proposition \ref{ncol}, we have $\Gamma_{\rho}(G)=n-1$.
 
Now let ${\rm rad}(G)=2$. If in addition ${\rm diam}(G)=4$, then we may assume that $A$ fulfills $(iv)$. Accordingly, let $w\notin A$ be the central vertex and $z\notin A$, $z\neq w$ be a non-diametrical vertex. If we set $c(x)=1=c(y)$, $c(w)=2$, $c(z)=3$ and every other vertex receives a different color, then $c$ is a packing $(n-1)$-coloring and $\Gamma_{\rho}(G)\geq n-1$. Equality now follows by Proposition \ref{ncol}. So, let ${\rm diam}(G)=3$. If there exists a central vertex $w\notin A$, then we set $c(x)=1=c(y)$, $c(w)=2$ and every other vertex receives a different color, yielding an $(n-1)$-packing coloring and $\Gamma_{\rho}(G)=n-1$ by Proposition \ref{ncol}. Otherwise, either $x$ or $y$, say $x$, is a central vertex and $d(x,y)=2$. Let $w$ be a common neighbor of $x$ and $y$ and we have $d(w,v)\leq 2$ for every $v\in V(G)$ since $A$ is an $i(G)$-set. So, a coloring where $c(x)=1=c(y)$, $c(w)=2$ and every other vertex receives a different color is an $(n-1)$-packing coloring and again $\Gamma_{\rho}(G)=n-1$ by the same reason. If ${\rm diam}(G)=2$, then we can use the same coloring where $w$ is a common neighbor of $x$ and $y$ and we are done with this direction.

Conversely, suppose first that $(i)$ does not hold. If $i(G)=1$, then $\Gamma_{\rho}(G)=n$ by Proposition \ref{ncol}. If $i(G)\geq 3$, then $\Gamma_{\rho}(G)\leq n-i(G)+1\leq n-2$ by Corollary \ref{upbound}, a contradiction. Therefore, $i(G)=2$. Now let $A=\{x,y\}$ be an $i(G)$-set and $B$ be a $G(\Gamma_{\rho}(G))$-set. Recall that $G^1$ and $G^2$ are subgraphs of $G(\Gamma_{\rho}(G))$. If $|B\cap V(G^1)|>i(G)$, then we have $\Gamma_{\rho}(G)\leq n-|B\cap V(G^1)|+1<n-i(G)+1=n-1$ and $\Gamma_{\rho}(G)\neq n-1$. Therefore, we may also assume that $|B\cap V(G^1)|=i(G)$ holds. 

By Lemma \ref{raddiam} we have ${\rm diam}(G)\in\{2,3,4,5\}$. So, if $(ii)$ does not hold, then ${\rm diam}(G)=5$. Let $v_1v_2v_3v_4v_5v_6$ be a diametrical path of $G$. Since $A$ is an $i(G)$-set, we may assume that $x$ is either $v_2$ or is adjacent to $v_1,v_2$ and $v_3$, and $y$ is either $v_5$ or it is adjacent to $v_4,v_5$ and $v_6$. In particular, $x$ and $y$ are different from $v_1$ and $v_6$ for any $i(G)$-set. Notice that no vertex in $G$ is at distance two to $v_1$ and $v_6$ at the same time and hence $|B\cap V(G^2)|>1$ follows. Therefore, $\Gamma_{\rho}(G)\leq n-i(G)-|B\cap V(G^2)|+2\leq n-2$. Thus, $\Gamma_{\rho}(G)\neq n-1$. 

If $(iii)$ is false, then ${\rm rad}(G)=3$ and for every $i(G)$-set $\{x,y\}$ with $d(x,y)=3$ and every $w\in N(x)$ there exists $z\in N(y)$ with $d(w,z)>2$. This means that no matter how we choose $w$ there always exists $z$ such that they are not adjacent in $G^2$. Again we have $|B\cap V(G^2)|>1$ and $\Gamma_{\rho}(G)\leq n-i(G)-|B\cap V(G^2)|+2\leq n-2$ which ends this case.

Finally, suppose that $(iv)$ does not hold. Then ${\rm rad}(G)=2$ and ${\rm diam}(G)=4$ and every $i(G)$-set either contains all central vertices or it does not contain one central vertex but contains all the other non-diametrical vertices. Alternative description is that outside of any $i(G)$-set, there exists at most one non-diametrical vertex $w$. If we have $c(x)=c(y)=1$ and $c(w)=2$, then any diametrical pair is nonadjacent in $G^3$ and $|B\cap V(G^3)|>1$. Hence, $\Gamma_{\rho}(G)\leq n-i(G)-1-|B\cap V(G^3)|+3\leq n-2$. Otherwise, if $c(x)=1=c(y)$, $c(w)\neq 2$, then $|B\cap V(G^2)|>1$ and $\Gamma_{\rho}(G)\leq n-i(G)-|B\cap V(G^2)|+2\leq n-2$ follows. Consequently, $\Gamma_{\rho}(G)\neq n-1$ in both cases.
\end{proof}

Graphs that are eliminated by $(iv)$ of Theorem \ref{n-1} are relatively rare. One can obtain them from two stars $K_{1,r}$ and $K_{1,t}$, $r,t>1$, where we identify one leaf $u$ of the first with one leaf $u'$ of the second star. In addition we can add arbitrary edges between the leaves of the first star (excluding $u$) and arbitrary edges between leaves of the second star ($u'$ excluded). Besides, a graph that fulfills $(iii)$ of Theorem \ref{n-1} is on Figure \ref{example(iii)}.

\begin{figure}[H]
\begin{center}
\begin{tikzpicture}[xscale=.8, yscale=.8, style=thick,x=1cm,y=1cm]
\def\vr{3.5pt} 


\path (0,0) coordinate (a);
\path (0,1.5) coordinate (b);
\path (1.5,0) coordinate (c);
\path (1.5,1.5) coordinate (d);
\path (3,1.5) coordinate (e);
\path (3,0) coordinate (f);
\path (4.5,0.75) coordinate (g);
\path (-1.5,1.5) coordinate (h);


\draw (h) -- (b) -- (a) -- (c) -- (b) -- (d) -- (a);
\draw (d) -- (e) -- (g) -- (f) -- (c);

\draw (a) [fill=white] circle (\vr);
\draw (b) [fill=white] circle (\vr);
\draw (c) [fill=white] circle (\vr);
\draw (d) [fill=white] circle (\vr);
\draw (e) [fill=white] circle (\vr);
\draw (f) [fill=white] circle (\vr);
\draw (g) [fill=white] circle (\vr);
\draw (h) [fill=white] circle (\vr);

\draw[anchor = north] (a) node {$w$};
\draw[anchor = south] (b) node {$x$};
\draw[anchor = west] (g) node {$y$};
\draw[anchor = north] (c) node {$z$};

\end{tikzpicture}
\end{center}
\caption{Graph $G$ that fulfills $(iii)$ of Theorem \ref{n-1}. }
\label{example(iii)}
\end{figure}


\section{Graphs with Small Diameter}\label{sec:SmallDiam}

This section is devoted to $\Gamma_{\rho}(G)$ of graphs with ${\rm diam}(G)\in\{1,2,3\}$. More precisely, we can ignore complete graphs, which are the only graphs of diameter one because obviously $\Gamma_{\rho}(K_n)=n$. We proceed with the assumption that \({\rm diam}(G) = 2\). The next result can be seen as an analogue of Equation (\ref{diam2}), a generalization of Proposition \ref{ncol} or a special case of Corollary \ref{upbound}. This follows from Theorem \ref{exact}, given that \(G^j \cong K_n\) for every \(j \geq 2\).

\begin{corollary} \label{diam22}
If $G$ is a graph with ${\rm diam}(G)=2$, then $\Gamma_{\rho}(G)=n-i(G)+1$.
\end{corollary}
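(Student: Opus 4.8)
The plan is to apply Theorem~\ref{exact} directly, exploiting the fact that diameter two collapses all the higher powers $G^j$ into complete graphs. First I would observe that for ${\rm diam}(G)=2$ we have $d_G(u,v)\leq 2$ for every pair of distinct vertices, so $G^j\cong K_n$ for every $j\geq 2$, while $G^1\cong G$. This is the structural input that makes everything downstream routine.

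Next I would analyze what an independent set $A$ of cardinality $n$ in $G(k)$ can look like, together with the DMP. Since each $G^j$ with $j\geq 2$ is a clique $K_n$, the set $A\cap V(G^j)$ can contain \emph{at most one} vertex for each such $j$. Consequently, to reach $|A|=n$ using only $G^1$ and the single-vertex contributions from $G^2,\dots,G^k$, the copy $A^1=A\cap V(G^1)$ must be an independent set of $G$ of size exactly $n-(k-1)$ (one vertex per higher level). I would then invoke the maximality condition built into Theorem~\ref{exact} via Lemma~\ref{pacgreedycol}: after running the DMP, $A^1$ is a maximal independent set of $G^1\cong G$. The key point is that the largest $k$ for which this is achievable is governed by how \emph{small} a maximal independent set of $G$ can be, namely $|A^1|\geq i(G)$, with equality attainable by choosing $A^1$ to be an $i(G)$-set.

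To finish, I would combine the two directions. For the upper bound, $|A^1|\geq i(G)$ forces $k-1 = n-|A^1|\leq n-i(G)$, hence $DMP(A)\leq n-i(G)+1$ for every admissible $A$; this is exactly Corollary~\ref{upbound}, so the inequality $\Gamma_{\rho}(G)\leq n-i(G)+1$ is already in hand. For the lower bound, take an $i(G)$-set $S$ and place it in $G^1$; since each $G^j\cong K_n$ for $j\geq 2$, I can place one of the remaining $n-i(G)$ vertices in each of $G^2,\dots,G^{n-i(G)+1}$, obtaining an independent set $A$ of cardinality $n$ in $G(n-i(G)+1)$ with $A\cap V(G^{n-i(G)+1})\neq\emptyset$. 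Because $S$ is already maximal in $G^1$ and every $G^j$ ($j\geq 2$) is complete (so a single chosen vertex is automatically maximal in the relevant residual clique), the DMP leaves this configuration essentially untouched and yields $DMP(A)=n-i(G)+1$. By Theorem~\ref{exact}, $\Gamma_{\rho}(G)\geq n-i(G)+1$, giving equality.

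The main obstacle, such as it is, lies in the lower-bound construction: I must verify carefully that placing an $i(G)$-set in $G^1$ and singletons in the higher cliques genuinely satisfies the maximality requirement of Lemma~\ref{pacgreedycol} at \emph{every} level and survives the DMP without the top level collapsing to empty. Since the higher powers are complete graphs, any residual $G^j-\cup_{\ell<j}A^\ell$ is still a clique and a single vertex is trivially a maximal independent set of it, so this verification is short; the only genuine content is the maximality of the $i(G)$-set in $G^1$, which holds by definition. Thus the argument is essentially a specialization of Theorem~\ref{exact} to the diameter-two case, and no serious difficulty arises.
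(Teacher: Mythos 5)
Your proposal is correct and follows exactly the route the paper intends: the paper derives this corollary from Theorem \ref{exact} together with the observation that $G^j\cong K_n$ for all $j\geq 2$, with the upper bound already supplied by Corollary \ref{upbound}. Your write-up simply makes explicit the lower-bound construction (an $i(G)$-set in $G^1$ plus singletons in the higher cliques) that the paper leaves implicit.
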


Well-known diameter two graphs are joins $G\vee H$ of graphs $G$ and $H$ (unless both $G$ and $H$ are complete graphs), which are graphs with $V(G\vee H)=V(G)\cup V(H)$ and $E(G\vee H)=E(G)\cup E(H)\cup \{gh:g\in V(G),h\in V(H)\}$. It is easy to see that $i(G\vee H)=\min\{i(G),i(H)\}$ which yields the following corollary.

\begin{corollary} \label{join}
For graphs $G$ and $H$, $\Gamma_{\rho}(G\vee H)=|V(G)|+|V(H)|-\min\{i(G),i(H)\}+1$. In particular, for $s,t\geq 1$, $p,r\geq 4$ and $n\geq 2$, we have: 
\begin{itemize}
\item $\Gamma_{\rho}(K_{s,t})=s+t-\min\{s,t\}+1$;
\item $\Gamma_{\rho}(G\vee K_1)=|V(G)|+1$;
\item $\Gamma_{\rho}(K_s\vee \overline{K}_n)=s+n$;
\item $\Gamma_{\rho}(P_p\vee P_r)=\Gamma_{\rho}(P_p\vee C_r)=\Gamma_{\rho}(C_p\vee C_r)=p+r-\min\left\{\left\lceil\frac{p}{3} \right\rceil, \left\lceil\frac{r}{3} \right\rceil\right\}+1$. 
\end{itemize}
\end{corollary}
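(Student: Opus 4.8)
The plan is to reduce everything to Corollary \ref{diam22} together with the independent-domination identity $i(G\vee H)=\min\{i(G),i(H)\}$ stated just before the corollary. First I would record why the join has small diameter: two vertices lying in different factors are adjacent, while two non-adjacent vertices of the same factor, say both in $G$, share every vertex of $H$ as a common neighbour, so their distance is exactly two. Hence ${\rm diam}(G\vee H)\le 2$, with equality to one precisely when both $G$ and $H$ are complete (so that $G\vee H=K_{|V(G)|+|V(H)|}$). In the genuinely diameter-two case Corollary \ref{diam22} applies directly and gives $\Gamma_{\rho}(G\vee H)=(|V(G)|+|V(H)|)-i(G\vee H)+1$; in the degenerate complete case I would instead invoke $\Gamma_{\rho}(K_m)=m$ and check agreement, since then $i(G)=i(H)=1$ and $\min\{i(G),i(H)\}=1$, so the formula is recovered.

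Next I would justify the key identity $i(G\vee H)=\min\{i(G),i(H)\}$. The crucial observation is that every nonempty independent set of $G\vee H$ is contained entirely in one factor, because each $G$-vertex is adjacent to each $H$-vertex. Consequently a nonempty $S\subseteq V(G)$ is a maximal independent set of $G\vee H$ if and only if it is a maximal independent set of $G$: no $H$-vertex can be added (it is adjacent to all of the nonempty $S$), and for $g\in V(G)\setminus S$ the set $S\cup\{g\}$ is independent in $G\vee H$ exactly when it is independent in $G$, so $S$ blocks all $G$-additions precisely when it is maximal in $G$. The symmetric statement holds for $V(H)$, so the maximal independent sets of $G\vee H$ are exactly those of $G$ together with those of $H$; taking the minimum cardinality yields $\min\{i(G),i(H)\}$. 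Substituting this into the displayed formula proves the general equality.

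Finally I would derive the four special cases by plugging in standard independent-domination numbers. For $K_{s,t}=\overline{K}_s\vee\overline{K}_t$ one has $i(\overline{K}_s)=s$ and $i(\overline{K}_t)=t$, giving $s+t-\min\{s,t\}+1$. For $G\vee K_1$ and $K_s\vee\overline{K}_n$ one uses $i(K_1)=i(K_s)=1$, so the minimum collapses to $1$ and the subtracted term disappears, leaving $|V(G)|+1$ and $s+n$ respectively. For the last line I would recall the classical values $i(P_p)=i(C_p)=\lceil p/3\rceil$, valid in the stated ranges $p,r\ge 4$, so that the minimum in each of the three joins equals $\min\{\lceil p/3\rceil,\lceil r/3\rceil\}$.

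I expect the only genuine content beyond bookkeeping to be the identity $i(G\vee H)=\min\{i(G),i(H)\}$ and the mild care needed at the complete/diameter-one boundary where Corollary \ref{diam22} does not literally apply; the path and cycle independent-domination values are standard, so the final bullet becomes a direct substitution once $i(P_p)=i(C_p)=\lceil p/3\rceil$ is invoked.
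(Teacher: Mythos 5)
Your proposal is correct and follows essentially the same route as the paper, which derives the corollary directly from Corollary \ref{diam22} together with the observation that joins have diameter at most two and satisfy $i(G\vee H)=\min\{i(G),i(H)\}$; the paper leaves these verifications (and the substitution of $i(P_p)=i(C_p)=\lceil p/3\rceil$) implicit, whereas you spell them out. No discrepancy in substance.
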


We now introduce a construction that is useful for presenting an alternative description of $\Gamma_{\rho}(G)$, where ${\rm diam}(G)=3$. It is obvious that $G^j\cong K_n$ for $n=|V(G)|$ and every $j\geq 3$. Hence, in view of Theorem \ref{exact}, where $B$ is a $G(\Gamma_{\rho}(G))$-set, only $B\cap V(G^1)$ and $B\cap V(G^2)$ can have more than one vertex. Moreover, if $x,y\in B\cap V(G^2)$, then they form a diametrical pair. Therefore, we define a diametrical graph $D(G)$ of $G$, where $V(D(G))=V(G)$ and two vertices $x$ and $y$ are adjacent in $D(G)$ if they form a diametrical pair of $G$. Notice that the graph $G$ in Figure \ref{G(3)} contains three diametrical vertices (all of degree two), mutually forming diametrical pairs, and $D(G)\cong K_3\cup 7K_1$. Similarly, we can see that $D(C_6)=3K_2$ and $D(Q_3)=4K_2$ where $Q_3$ is a cube graph. With $D(G)$, we can describe $\Gamma_{\rho}(G)$ for any diameter three graph as an optimization problem over all maximal independent sets of $G$.     

\begin{theorem} \label{diam3}
Let $G$ be a graph and let $\cal{M}$ be the set of all maximal independent sets of $G$. If ${\rm diam}(G)=3$, then $\Gamma_{\rho}(G)=|V(G)|-m(G)+2$, where 
$$m(G)=\min_{A\in\cal{M}}\{|A|+|Q|:Q \text{ is a maximal clique of minimum cardinality of }D(G)-A\}.$$
\end{theorem}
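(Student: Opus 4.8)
The plan is to exploit the drastic simplification that diameter three forces on the auxiliary graphs $G^i$, and then translate the maximality conditions of Lemma \ref{pacgreedycol} into the language of $D(G)$. First I would record that since ${\rm diam}(G)=3$ we have $G^j\cong K_n$ for every $j\geq 3$, so in any $G(\Gamma_\rho(G))$-set $B$ only the two layers $B^1=B\cap V(G^1)$ and $B^2=B\cap V(G^2)$ can contain more than one vertex, while each complete layer $G^3,G^4,\dots$ contributes at most one vertex to $B$. By Lemma \ref{pacgreedycol}, $B^1$ is a maximal independent set $A$ of $G=G^1$, so the color-$1$ class is exactly such an $A$; note $A\neq V(G)$ since $G$ has edges, so $D(G)-A$ is nonempty.

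The crux is identifying the color-$2$ class. Two vertices lie together in $B^2$ precisely when they are non-adjacent in $G^2$, i.e.\ at distance greater than $2$, which in a diameter-three graph means distance exactly $3$; hence $B^2$ is a clique of $D(G)$ disjoint from $A$, that is, a clique of $D(G)-A$. I would then prove the key equivalence: a set $S\subseteq V(G)\setminus A$ is a maximal independent set of $G^2-A$ if and only if $S$ is a maximal clique of $D(G)-A$. Independence in $G^2-A$ is clique-hood in $D(G)-A$ as just noted; for maximality, a vertex $w$ can be added to $S$ while preserving independence in $G^2-A$ iff $d_G(w,s)=3$ for every $s\in S$, iff $w$ is adjacent in $D(G)$ to all of $S$, iff $S\cup\{w\}$ is a clique of $D(G)-A$. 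Thus the two notions of maximality coincide, and the color-$2$ class $B^2$ is a maximal clique $Q$ of $D(G)-A$.

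With $A$ and $Q$ fixed the color count is forced: the first class has size $|A|$, the second size $|Q|$, and every color $i\geq 3$ colors exactly one vertex because $G^i\cong K_n$ admits only singleton maximal independent sets (and any two vertices sharing a color $i\geq 3$ would need $d_G>i\geq 3$, impossible). Since the classes partition $V(G)$, the number of colors equals $n-|A|-|Q|+2$. To finish I would establish both inequalities. For the upper bound, every greedy packing coloring produces such a pair $(A,Q)$, so $\Gamma_\rho(G)\leq n-\min(|A|+|Q|)+2=n-m(G)+2$. For the converse, given any maximal independent set $A$ and any maximal clique $Q$ of $D(G)-A$, I would order $V(G)$ as $A$, then $Q$, then the rest arbitrarily, and verify that Algorithm 1 assigns color $1$ to all of $A$ (each $v\notin A$ sees a neighbor in $A$ by maximality), color $2$ to all of $Q$ (pairwise distance $3$, and maximality of $Q$ blocks color $2$ for every further vertex), and then consecutive colors $3,4,\dots$ to the remaining vertices (each subsequent vertex is within distance $3$ of all previously colored ones). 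This realizes a greedy packing coloring with $n-|A|-|Q|+2$ colors via Lemma \ref{pacgreedycol}, so minimizing $|A|+|Q|$ over all admissible pairs yields $m(G)$ and the claimed formula.

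The main obstacle I anticipate is the careful bookkeeping of the maximality conditions together with the boundary case. In particular, Lemma \ref{pacgreedycol} only requires maximality of $B^i$ for $i\leq k-1$, so the two-color situation, in which $Q=V(G)\setminus A$ need not be "maximal" in the usual sense, must be inspected separately; however it is automatically consistent, since an all-of-the-remaining-vertices clique is trivially a maximal clique of $D(G)-A$ and gives exactly the value $n-|A|-|Q|+2=2$. The most delicate verification is confirming that the proposed vertex ordering genuinely drives Algorithm 1 to the intended \emph{greedy} coloring rather than merely to some feasible packing coloring.
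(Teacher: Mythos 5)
Your proof is correct and follows essentially the same route as the paper's: both rest on the observation that $G^j\cong K_n$ for $j\geq 3$ forces every color class beyond the second to be a singleton, identify the first class with a maximal independent set $A$ and the second with a maximal clique of $D(G)-A$ (via the equivalence between maximal independent sets of $G^2-A$ and maximal cliques of $D(G)-A$, which you make more explicit than the paper does), and then count. The only cosmetic difference is that you certify the lower bound by exhibiting an explicit vertex ordering for Algorithm 1, whereas the paper builds the corresponding independent set of $G(k)$ and invokes Theorem \ref{exact}.
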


\begin{proof}
Let $G$ be a graph with $|V(G)|=n$ and ${\rm diam}(G)=3$. Let $A_0\in \cal{M}$ such that $m(G)=|A_0|+|Q|$, where $Q$ is a maximal clique of minimum cardinality of $D(G)-A_0$. Notice that not all vertices of $G$ belong to $A_0$ since ${\rm diam}(G)=3$. Therefore, $D(G)-A_0$ contains some vertices and $m(G)$ is well defined. We construct $B\subseteq V(G(k))$ for $1 \le k \le n$ as follows. All the vertices from $G^1$ that correspond to vertices of $A_0$ in $G$ are put in $B$. Similarly, we put the corresponding vertices from $Q$ in $G^2$ to $B$. In addition, $B$ gets exactly one corresponding vertex from $V(G)-(A_0\cup Q)$ for every other $G^j$, $j\geq 3$, until all vertices are used. Clearly, $B$ is an independent set of $G(k)$ and $k=|V(G)|-|A_0|-|Q|+2$ when $|B|=n$. By Theorem \ref{exact}, we get $\Gamma_{\rho}(G)\geq |V(G)|-|A_0|-|Q|+2=|V(G)|-m(G)+2$. 

Suppose now contrary that $\Gamma_{\rho}(G)>n-m(G)+2$ and let $V_1,\dots,V_t$, $t=\Gamma_{\rho}(G)$, be color classes of a greedy packing $t$-coloring. Let $B_i$ be vertices of $G^i$ that correspond to $V_i$ for every $i\in\{1,\dots,t\}$ and let $B=B_1\cup\cdots\cup B_t$. Now, $V_1$ is a maximal independent set of $G$ and $V_2$ a maximal independent set of $G^2-B_1$ by Lemma \ref{pacgreedycol}. If $V_2=\{x\}$, then either $x$ is not a diametrical vertex of $G$, or if it is, then all vertices that form a diametrical pair with $x$ belong to $V_1$. If $|V_2|>1$, then all the vertices from $V_2$ form pairwise diametrical pairs in $G$. Moreover, $V_2$ forms a maximal clique in $D(G)-V_1$. If $V_2$ in not a maximal clique of minimal cardinality of $D(G)-V_1$, then we get a contradiction with a greedy packing $t$-coloring by replacing $V_2$ with a smaller maximal clique in $D(G)-V_1$. Thus, we have two sets $V_1$ and $V_2$ that fulfills conditions of $m(G)$; however, $|V_1|+|V_2|<m(G)$, a contradiction. Therefore, the equality holds and the proof is completed.  
\end{proof}

The last theorem cannot be improved, as demonstrated by the family $K_{n,n}-M$, $n\geq 3$, where $M=\{u_iv_i:i\in\{1,\dots,n\}\}$ is a perfect matching. Notice that $C_6$ and $Q_3$ belong to this family for $n=3$ and $n=4$, respectively. There are two types of maximal independent sets in $K_{n,n}-M$. The first one is $A_1=\{u_i,v_i\}$ for $i\in\{1,\dots,n\}$ and the second one is $A_2=\{x_1,\dots,x_n\}$ for $x\in\{u,v\}$. A maximal clique of minimum cardinality of $D((K_{n,n}-M))-A_1=(n-1)K_2$ is $K_2$ and of $D((K_{n,n}-M))-A_2=nK_1$ is $K_1$. Hence, $m(K_{n,n})=4$ for $n\geq 3$ and 
\begin{equation}\label{example}
\Gamma_{\rho}(K_{n,n}-M)=2n-2 
\end{equation}
follows by Theorem \ref{diam3}. Besides, notice that for $n>3$ there is no singleton in $D(K_{n,n}-M)-A_1$. Therefore, a simpler general result cannot be obtained for diameter three graphs. Nevertheless, this is possible for many diameter three graphs whenever there exists a singleton in $D(G)-A$, where $A$ is an $i(G)$-set. In such a case, $m(G)=i(G)+1$ and the following result is a direct consequence of Theorem \ref{diam3}: 

\begin{corollary} \label{diam31}
Let $G$ be a graph with ${\rm diam}(G)=3$. If there exists an $i(G)$-set $A$ with a singleton in $D(G)-A$, then $\Gamma_{\rho}(G)=n-i(G)+1$.
\end{corollary}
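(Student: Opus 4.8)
The plan is to reduce the statement to the single computation $m(G) = i(G) + 1$ and then read off the conclusion from Theorem \ref{diam3}, which gives
$\Gamma_{\rho}(G) = n - m(G) + 2 = n - (i(G)+1) + 2 = n - i(G) + 1$.
So the whole argument amounts to pinning down the value of $m(G)$ via two opposite inequalities.

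For the upper bound $m(G) \leq i(G)+1$, I would simply exhibit a good pair in the minimum defining $m(G)$. Let $A$ be the $i(G)$-set guaranteed by the hypothesis, for which $D(G)-A$ contains a singleton, i.e.\ a vertex $v$ that is isolated in $D(G)-A$. Then $\{v\}$ is a maximal clique of $D(G)-A$ of cardinality $1$, and a maximal clique of a nonempty graph cannot have fewer than one vertex, so $\{v\}$ is a maximal clique of \emph{minimum} cardinality. Since $A\in\cal{I}$ with $|A| = i(G)$, the pair $(A,\{v\})$ witnesses $m(G) \leq |A| + |\{v\}| = i(G) + 1$.

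For the reverse bound $m(G) \geq i(G)+1$, I would show that \emph{every} term in the minimum is at least $i(G)+1$. For an arbitrary $A'\in\cal{I}$ we have $|A'|\geq i(G)$ by definition of the independent domination number. Moreover, since ${\rm diam}(G)=3$ the graph $G$ has edges, so no maximal independent set can equal $V(G)$; hence $D(G)-A'$ is nonempty and any maximal clique $Q$ of it satisfies $|Q|\geq 1$. Thus $|A'|+|Q|\geq i(G)+1$ for every admissible pair, giving $m(G)\geq i(G)+1$. Combining the two inequalities yields $m(G)=i(G)+1$, and the claim follows from Theorem \ref{diam3}.

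This is essentially a direct evaluation, so I do not expect a genuine obstacle. The only point needing a word of care is the well-definedness of the inner quantity (and hence of $m(G)$): one must verify that $D(G)-A'$ is nonempty for every maximal independent set $A'$ so that a minimum-cardinality maximal clique exists, and this is exactly what ${\rm diam}(G)=3$ ensures, as noted in the lower-bound step.
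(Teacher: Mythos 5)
Your proposal is correct and takes essentially the same approach as the paper: the paper likewise observes that the hypothesized singleton forces $m(G)=i(G)+1$ and then reads off the result from Theorem \ref{diam3}. Your two-inequality verification (the pair $(A,\{v\})$ giving $m(G)\leq i(G)+1$, and $|A'|\geq i(G)$, $|Q|\geq 1$ giving the reverse) simply makes explicit what the paper asserts in a single sentence.
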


We can demonstrate the use of Corollary \ref{diam31} on split graphs $S_{p,r}$ of diameter three. Recall that $S_{p,r}$ is a \emph{split graph}  (on $p+r$ vertices) if there exists a partition of $V(S_{p,r})$ into a clique $Q$ on $p$ vertices and into an independent set $I$ on $r$ vertices. If ${\rm diam}(S_{p,r})=3$, then there exists no universal vertex and $p,r\geq 2$. Notice that $S_{p,r}$ can have diameter two, even if there are no universal vertices in $S_{p,r}$. There always exists a vertex $v$ of maximal degree that belongs to $Q$. It is easy to observe that $A=V(S_{p,r})-N(v)$ forms an $i(S_{p,r})$-set of cardinality $p+r-\Delta(S_{p,r})$. Moreover, $u\in Q-\{v\}$ is a singleton in $D(S_{p,r})-A$ and the following result is a direct consequence of Corollary \ref{diam31}:

\begin{corollary} \label{diam35}
If $S_{p,r}$ is a split graph with ${\rm diam}(S_{p,r})=3$, then $\Gamma_{\rho}(S_{p,r})=\Delta(S_{p,r})+1$.
\end{corollary}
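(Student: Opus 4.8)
The plan is to reduce the statement to Corollary \ref{diam31} by producing an $i(S_{p,r})$-set $A$ whose removal leaves a singleton in the diametrical graph $D(S_{p,r})$. Once both hypotheses of that corollary are in place, it immediately gives $\Gamma_{\rho}(S_{p,r}) = n - i(S_{p,r}) + 1$, and it then remains only to identify $i(S_{p,r})$ with $(p+r) - \Delta(S_{p,r})$ so that the right-hand side collapses to $\Delta(S_{p,r}) + 1$.

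First I would fix the clique/independent-set partition $V(S_{p,r}) = Q \cup I$ and argue that a maximum-degree vertex may be chosen inside $Q$: for any $w \in I$, every neighbor of $w$ lies in $Q$ and is adjacent to all of $Q$, so any clique neighbor of $w$ dominates $N(w)$ and hence has degree at least $\deg(w)$. Taking such a $v \in Q$ with $\deg(v) = \Delta(S_{p,r})$, I set $A = V(S_{p,r}) - N(v)$. Since $v$ is adjacent to every other clique vertex, its non-neighbors all lie in $I$, so $A = \{v\} \cup (I \setminus N(v))$ is independent; it is maximal because every vertex outside $A$ lies in $N(v)$ and is therefore adjacent to $v \in A$. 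This already yields $|A| = (p+r) - \Delta(S_{p,r})$.

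The main obstacle is showing that $A$ is actually of minimum cardinality, i.e. that $i(S_{p,r}) = (p+r) - \Delta(S_{p,r})$. Here I would use that any maximal independent set $B$ meets the clique in at most one vertex. If $B \cap Q = \{q\}$, maximality forces $B = \{q\} \cup (I \setminus N(q))$, and writing $\deg(q) = (p-1) + |N(q) \cap I|$ gives $|B| = (p+r) - \deg(q) \geq (p+r) - \Delta(S_{p,r})$. The remaining case $B \subseteq I$ forces $B = I$ by maximality, of size $r$; to see this is not smaller I would invoke $\Delta(S_{p,r}) \geq p$, which holds because connectivity (diameter three) guarantees an edge between $I$ and $Q$, and any clique endpoint of such an edge has degree at least $p$. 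Hence every maximal independent set has cardinality at least $(p+r) - \Delta(S_{p,r})$, so $A$ is an $i(S_{p,r})$-set.

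Finally I would verify the singleton condition. Since $p \geq 2$, the set $Q \setminus \{v\}$ is nonempty and disjoint from $A$; I pick any $u$ in it. Every clique vertex is adjacent to all of $Q$ and, through $Q$, reaches every independent vertex in at most two steps (each $w \in I$ has a clique neighbor), so ${\rm ecc}(u) \leq 2$ and $u$ has no diametrical partner. Thus $u$ is isolated in $D(S_{p,r})$, and a fortiori a singleton of $D(S_{p,r}) - A$. With $A$ an $i(S_{p,r})$-set satisfying the hypothesis of Corollary \ref{diam31}, I conclude $\Gamma_{\rho}(S_{p,r}) = n - i(S_{p,r}) + 1 = \Delta(S_{p,r}) + 1$.
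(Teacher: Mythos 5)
Your proof is correct and follows the same route as the paper: take a maximum-degree vertex $v$ on the clique side, use $A=V(S_{p,r})-N(v)$ as an $i(S_{p,r})$-set of cardinality $p+r-\Delta(S_{p,r})$, exhibit a clique vertex $u\neq v$ as a singleton of $D(S_{p,r})-A$, and invoke Corollary \ref{diam31}. The only difference is that you verify explicitly (via the case analysis on $B\cap Q$ and the bound $\Delta(S_{p,r})\geq p$) the claims the paper dismisses as ``easy to observe.''
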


Now we briefly shift to the packing chromatic number. Namely, there is no counterpart of Theorem \ref{diam3} for $\chi_{\rho}(G)$ found in the literature. Although symmetric in nature and proved similarly, we present it here for completeness. Recall that $\omega(G)$ is the clique number of a graph $G$, which is the maximum cardinality of a complete subgraph of $G$. 

\begin{theorem} \label{diam32}
Let $G$ be a graph and let $\cal{M}$ be the set of all maximal independent sets of $G$. If ${\rm diam}(G)=3$, then $\chi_{\rho}(G)=|V(G)|-m'(G)+2$, where 
$$m'(G)=\max_{A\in\cal{M}}\{|A|+\omega(D(G)-A)\}.$$
\end{theorem}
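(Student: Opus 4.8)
The plan is to mirror the proof of Theorem \ref{diam3}, exploiting the same structural observation that drives it: since ${\rm diam}(G)=3$, we have $G^j\cong K_n$ for every $j\ge 3$, so in any packing coloring every color class $V_i$ with $i\ge 3$ is a singleton, the class $V_1$ is an independent set of $G$, and the class $V_2$ is a $2$-packing. Because every pair of vertices of a $2$-packing is at distance exactly $3$ (distance $>2$ but at most ${\rm diam}(G)=3$), such a class is precisely a clique of $D(G)$ disjoint from $V_1$, i.e.\ a clique of $D(G)-V_1$. Minimizing the number of colors therefore amounts to maximizing $|V_1|+|V_2|$, which is exactly the intuition behind the quantity $m'(G)$.

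For the inequality $\chi_{\rho}(G)\le n-m'(G)+2$ I would argue by an explicit construction. I would choose $A_0\in\cal{I}$ attaining the maximum defining $m'(G)$, and let $Q$ be a maximum clique of $D(G)-A_0$, so that $|A_0|+|Q|=m'(G)$. Then I color all vertices of $A_0$ with $1$, all vertices of $Q$ with $2$, and assign the remaining $n-|A_0|-|Q|$ vertices pairwise distinct colors $3,4,\dots$. This is a legitimate packing coloring: color $1$ is an independent set, color $2$ is a set of pairwise diametrical vertices (hence a $2$-packing), and each color $\ge 3$ is used once. Since ${\rm diam}(G)=3$ forces $A_0\ne V(G)$, the set $Q$ is nonempty and color $2$ is genuinely used, so the total number of colors is exactly $2+(n-|A_0|-|Q|)=n-m'(G)+2$.

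For the reverse inequality $\chi_{\rho}(G)\ge n-m'(G)+2$ I would take any packing $t$-coloring with $t=\chi_{\rho}(G)$ and classes $V_1,\dots,V_t$, and simply count: since $|V_i|\le 1$ for $i\ge 3$, we get $n=\sum_i|V_i|\le |V_1|+|V_2|+(t-2)$, hence $t\ge n-|V_1|-|V_2|+2$. It then remains to show $|V_1|+|V_2|\le m'(G)$. Here $V_1$ is independent and $V_2$ is a clique of $D(G)-V_1$, so $|V_1|+|V_2|\le |V_1|+\omega(D(G)-V_1)$; the only gap is that $V_1$ need not be maximal, whereas $m'(G)$ ranges only over maximal independent sets. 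I would close this gap with a short monotonicity observation: adding one vertex $v$ to an independent set $A$ increases $|A|$ by one and decreases $\omega(D(G)-A)$ by at most one (the clique $Q\setminus\{v\}$ survives in $D(G)-(A\cup\{v\})$), so $|A|+\omega(D(G)-A)$ never decreases when $A$ is extended to a maximal independent set. Consequently $|V_1|+\omega(D(G)-V_1)\le m'(G)$, which finishes the bound and, together with the construction, gives equality.

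I expect the main (though modest) obstacle to be precisely this last reduction to maximal independent sets: unlike the $\Gamma_{\rho}$ case, where one minimizes and the greedy structure forces maximality via Lemma \ref{pacgreedycol}, here maximality of $V_1$ is not automatic in an optimal packing coloring, so the monotonicity argument is what justifies restricting the maximization in $m'(G)$ to $\cal{I}$. Everything else is a direct transcription of the structural facts about diameter-three graphs already established for Theorem \ref{diam3}.
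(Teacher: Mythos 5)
Your proposal is correct and follows the same overall route as the paper: the same structural decomposition (every class of color $i\ge 3$ is a singleton because $G^j\cong K_n$ for $j\ge 3$, the class $V_1$ is an independent set, and $V_2$ is a clique of $D(G)-V_1$ since any two vertices of a $2$-packing in a diameter-three graph form a diametrical pair), the same explicit construction for the upper bound, and the same counting for the lower bound. The one place you genuinely diverge is the reduction to \emph{maximal} independent sets in the lower bound. The paper routes this through the $G(k)$ framework: it applies the dense maximization procedure to the independent set of $G(t)$ associated with an optimal coloring, so that the level-one part becomes a maximal independent set $A_1$ of $G$ and the level-two part a maximal clique of $D(G)-A_1$, and then derives a contradiction from $|A_1|+|A_2|\ge |V_1|+|V_2|>m'(G)$. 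You instead prove a small monotonicity observation: adjoining a vertex to an independent set $A$ increases $|A|$ by one and decreases $\omega(D(G)-A)$ by at most one, so $|A|+\omega(D(G)-A)$ cannot decrease when $A$ is extended to a maximal independent set. This is sound and, if anything, more self-contained: it avoids relying on Lemma \ref{pacgreedycol} and DMP (which were designed for the greedy setting, and indeed the paper's reference to a ``greedy packing $t$-coloring'' in this proof is a slip, since only an ordinary optimal packing coloring is available), and it makes explicit the inequality $|A_1|+|A_2|\ge |V_1|+|V_2|$ that the paper asserts tersely. The paper's version buys reuse of machinery already set up for Theorem \ref{diam3}; yours buys a shorter, elementary justification of exactly the step you correctly identified as the only non-immediate one.
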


\begin{proof}
Let $G$ be a graph with $|V(G)|=n$ and ${\rm diam}(G)=3$. Let $A_0\in \cal{M}$ such that $m'(G)=|A_0|+|Q|$ where $Q$ is a maximum clique of $D(G)-A_0$. Notice that not all vertices of $G$ belong to $A_0$ since ${\rm diam}(G)=3$. Therefore $D(G)-A_0$ contains some vertices and $m'(G)$ is well defined. We construct $B\subseteq V(G(k))$ as follows. All the vertices from $G^1$ that correspond to vertices of $A_0$ in $G$ are put in $B$. Similar we put corresponding vertices from $Q$ in $G^2$ to $B$. In addition $B$ gets exactly one corresponding vertex from $V(G)-(A_0\cup Q)$ for every other $G^j$, $j\geq 3$, until all vertices are used. Clearly, $B$ is an independent set of $G(k)$ and $k=|V(G)|-|A_0|-|Q|+2$ when $|B|=n$. By (\ref{others}) we get $\chi_{\rho}(G)\leq |V(G)|-|A_0|-|Q|+2=|V(G)|-m'(G)+2$. 

Now suppose to the contrary that $\chi_{\rho}(G)<n-m'(G)+2$ and let $V_1,\dots,V_t$, $t=\chi_{\rho}(G)$, be color classes of a greedy packing $t$-coloring. Let $B_i$ be vertices of $G^i$ that correspond to $V_i$ for every $i\in\{1,\dots,t\}$ and let $B=B_1\cup\cdots\cup B_t$. After applying DMP on $B$, we obtain a potentially modified set $B$ with additional properties, namely that $B\cap G^t\neq \emptyset$, because otherwise it would contradict Equation (\ref{others}). Furthermore, vertices of $A_1\subset V(G)$ that correspond to vertices of $B\cap G^1$ form a maximal independent set of $G$. Moreover, vertices of $A_2\subset V(G)$ that correspond to vertices of $B\cap G^2$ form a maximal independent set of $G-A_1$. Since $G$ is diameter three graph, $A_2$ form a maximal clique of $D(G)-A_1$. If $|A_2|\neq\omega(D(G)-A_1)$, then we get a contradiction with a packing $\Gamma_{\rho}(G)$-coloring by replacing $A_2$ with a maximum clique of $D(G)-A_1$. Thus, we have two sets $A_1$ and $A_2$ that fulfills conditions of $m'(G)$; however, $|A_1|+|A_2|\geq |V_1|+|V_2|>m'(G)$, a contradiction. Therefore, the equality holds and the proof is completed.   
\end{proof}

\textcolor{black}{We illustrate also Theorem \ref{diam32} with graph $K_{n,n}-M$ where $M=\{u_iv_i:i\in\{1,\dots,n\}\}$ is a perfect matching of $K_{n,n}$ and $n\geq 3$. As previously mentioned, we have two types of maximal independent sets: either $A_1=\{u_i,v_i\}$ for $i\in\{1,\dots,n\}$ or $A_2=\{x_1,\dots,x_n\}$ for $x\in\{u,v\}$. Clearly, $m'(G)=n+1$ and by Theorem \ref{diam32} we have 
\begin{equation}\label{example1}
\chi_{\rho}(K_{n,n}-M)=n+1.
\end{equation}
}

On the other hand, there is no analogue of Corollary \ref{diam31} for $\chi_{\rho}(G)$. To illustrate this, consider the graph $G$ shown of Figure \ref{G(3)}. The unique $\alpha(G)$-set $A_1$ of $G$ (all degre two vertices and the one in the middle) contains all the diametrical pairs and $D(G)-A_1=6K_1$. Hence, we get $|A_1|+\omega(D(G)-A_1)=5$. The maximal independent set $A_2$ that consists of three black vertices (see graph $G$ of Figure \ref{G(3)}) is not an $\alpha(G)$-set of $G$. In this case, $\omega(D(G)-A_2)=3$ and $m'(G)=6$ is achieved by a maximal independent set that is not an $\alpha(G)$-set.  


\section{Concluding Remarks and Discussion} \label{sec:Conclusion}

A natural relation between packing and Grundy packing chromatic number is the following:
$$\chi_{\rho}(G)\leq \Gamma_{\rho}(G).$$
This raises the natural question of when equality holds and when the difference can be arbitrarily large. It appears that several simple graph families exist where the difference can indeed be arbitrarily large. First, such a family forms stars $S_n=K_{n-1,1}$. We have $\Gamma_{\rho}(S_n)=n$ by Corollary \ref{join} and it is easy to see that $\chi_{\rho}(S_n)=2$. Thus, we have a family of graphs where the difference $\Gamma_{\rho}(S_n)-\chi_{\rho}(S_n)=n-2$ tends to infinity as $n$ grows.    

\textcolor{black}{Another family with arbitrary large difference between $\Gamma_{\rho}(G)$ and $\chi_{\rho}(G)$ represents graphs $K_{n,n}-M$ for $n\geq 3$, where $M$ is a perfect matching. Recall that $\Gamma_{\rho}(G_n)=2n-2$ by (\ref{example}) and $\chi_{\rho}(G_n)=n+1$ by (\ref{example1}).} So, again $\Gamma_{\rho}(G_n)-\chi_{\rho}(G_n)=n-3$ tends to infinity when $n$ grows. The next result now follows directly.  

\begin{proposition} \label{large}
For an arbitrary positive integer $k$, there exists a graph $G$ such that $\Gamma_{\rho}(G)-\chi_{\rho}(G)\geq k$.
\end{proposition}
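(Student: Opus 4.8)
The plan is to prove Proposition~\ref{large} by exhibiting explicit graph families, which is exactly the strategy foreshadowed by the two worked examples in the preceding discussion. Since the two families $S_n=K_{n-1,1}$ and $K_{n,n}-M$ have already been analyzed, the only remaining task is to assemble these computations into a clean proof. First I would fix an arbitrary positive integer $k$ and exhibit a single concrete graph $G$ with $\Gamma_{\rho}(G)-\chi_{\rho}(G)\geq k$.

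The cleanest route uses the star family. I would take $G=S_{n}=K_{n-1,1}$ with $n=k+2$. The upper value is handled by Corollary~\ref{join}: since $S_{n}=K_{n-1,1}$ has a universal vertex (the center, adjacent to all $n-1$ leaves), the bullet $\Gamma_{\rho}(G\vee K_1)=|V(G)|+1$ applies with $G=\overline{K}_{n-1}$, giving $\Gamma_{\rho}(S_{n})=n$. Alternatively one may cite Proposition~\ref{ncol} directly, since $S_{n}$ has a universal vertex. For the lower value, ${\rm diam}(S_{n})=2$ for $n\geq 3$ and the star admits the packing $2$-coloring assigning color $1$ to all leaves (they are pairwise at distance $2>1$) and color $2$ to the center, so $\chi_{\rho}(S_{n})=2$; one color is clearly insufficient since $S_{n}$ is not complete. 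Therefore $\Gamma_{\rho}(S_{n})-\chi_{\rho}(S_{n})=n-2=k$, establishing the claim.

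I would then remark that the alternative family $K_{n,n}-M$ gives a second, independent witness, since its values were already computed in the discussion: $\Gamma_{\rho}(K_{n,n}-M)=2n-2$ by Equation~(\ref{example}), while $\alpha(K_{n,n}-M)=n$ yields $\chi_{\rho}(K_{n,n}-M)=n+1$ by Equation~(\ref{others}) (for these diameter-three graphs one checks $m(K_{n,n}-M)$ forces the packing chromatic number via the $A_2$-type independent set of size $n$). Hence $\Gamma_{\rho}-\chi_{\rho}=n-3$, which tends to infinity as well. Including both families strengthens the proposition by showing the phenomenon is not an artifact of a single degenerate construction.

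There is essentially no genuine obstacle here, as the proposition is a direct corollary of the two explicit computations preceding it; the entire content is bookkeeping. The only point requiring a small amount of care is making sure the stated values of $\Gamma_{\rho}$ and $\chi_{\rho}$ are quoted with the correct hypotheses (e.g.\ $n\geq 3$ for the star so that it is not complete, and $n\geq 4$ for $K_{n,n}-M$ so that the diameter-three analysis applies). Consequently, I would keep the proof to a few lines: fix $k$, invoke the star computation to produce a gap of exactly $k$, and note the parallel family as a second example.
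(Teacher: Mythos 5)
Your proposal is correct and follows essentially the same route as the paper, which also derives the proposition from the star family $S_n=K_{n-1,1}$ (with $\Gamma_{\rho}(S_n)=n$ via Corollary~\ref{join} and $\chi_{\rho}(S_n)=2$) and mentions $K_{n,n}-M$ as a second witness. The only difference is that you make the bookkeeping explicit by choosing $n=k+2$, which the paper leaves implicit.
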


A natural family where the equality between $\Gamma_{\rho}(G)$ and $\chi_{\rho}(G)$ holds are complete graphs. Besides, we do not expect this equality to hold for many graphs. If we restrict ourselves to diameter two graphs, then by (\ref{diam2}) and Proposition \ref{diam22}, $\alpha(G)=i(G)$ holds. 
A graph is said to be \emph{well-covered} if all its (inclusion-)maximal independent sets have the same size, that is when $\alpha(G)=i(G)$. Well-covered graphs were introduced by Plummer in 1970 \cite{Plummer1970} and have been studied extensively in the literature (see, e.g., the survey papers \cite{Plummer1993, Hartnell13}). Whence the next result and following general problem.  

\begin{corollary} \label{wellcov}
If $G$ is well-covered graph with ${\rm diam}(G)=2$, then $\Gamma_{\rho}(G)=\chi_{\rho}(G)$.
\end{corollary}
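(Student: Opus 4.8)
The plan is to combine directly the two closed-form expressions that are already available for diameter-two graphs. First I would recall Equation (\ref{diam2}), which gives $\chi_{\rho}(G)=|V(G)|-\alpha(G)+1$ for any graph of diameter two, together with Corollary \ref{diam22}, which gives $\Gamma_{\rho}(G)=|V(G)|-i(G)+1$ under the same hypothesis. Both identities hold for the same structural reason: when ${\rm diam}(G)=2$ we have $G^j\cong K_n$ for every $j\geq 2$, so in the transformed graph $G(k)$ only the copy $G^1\cong G$ can contribute more than one vertex to an independent set of size $n$. This forces $\chi_{\rho}(G)$ to be governed by the \emph{largest} such independent set (hence $\alpha(G)$) and, via Theorem \ref{exact}, $\Gamma_{\rho}(G)$ to be governed by the \emph{smallest maximal} one (hence $i(G)$).

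The second step is to invoke the definition of a well-covered graph: all maximal independent sets have the same cardinality, which is exactly the statement $\alpha(G)=i(G)$. Substituting this equality into the two formulas above yields
\[
\Gamma_{\rho}(G)=|V(G)|-i(G)+1=|V(G)|-\alpha(G)+1=\chi_{\rho}(G),
\]
which is the desired conclusion.

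There is no genuine obstacle in this argument; the corollary is an immediate consequence of the two previously established identities combined with the defining property of well-covered graphs. The only point worth verifying is that both Equation (\ref{diam2}) and Corollary \ref{diam22} presuppose ${\rm diam}(G)=2$, so the hypothesis of the corollary is precisely what is needed to apply both simultaneously, and no further case analysis or structural argument about $G$ is required.
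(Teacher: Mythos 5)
Your proposal is correct and follows exactly the route the paper intends: combine Equation~(\ref{diam2}) with Corollary~\ref{diam22} and use the defining property $\alpha(G)=i(G)$ of well-covered graphs to equate the two formulas. No gaps, and nothing differs from the paper's (essentially one-line) justification.
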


\begin{problem} \label{prob1}
Describe all graphs for which $\Gamma_{\rho}(G)=\chi_{\rho}(G)$ holds. In particular, does this equality hold for all well-covered graphs?
\end{problem}

For diameter three graphs, we pose two natural problems. The first one is with respect to Corollary \ref{diam31}, where we are interested in all graphs for which the equality from Corollary \ref{diam31} holds. To this end,  we call a graph $G$ \emph{diametrical} if every vertex of $G$ is a diametrical vertex. 

\begin{problem} \label{prob2}
Describe all diameter three graphs for which $\Gamma_{\rho}(G)=|V(G)|-i(G)+1$ holds. In particular, does any diametrical graph satisfy the equality and are there any exceptions to the equality besides diametrical graphs?
\end{problem}

The second problem we pose is motivated by the example described after Theorem \ref{diam32}, where  $\chi_{\rho}(G)$-coloring does not result in the vertices of color one forming an $\alpha(G)$-set. We have not found any example where the vertices of color one do not form an $i(G)$-set in a $\Gamma_{\rho}(G)$-coloring.   

\begin{problem} \label{prob3}
Describe all diameter-three graphs where the vertices of color one do not form an $i(G)$-set for any $\Gamma_{\rho}(G)$-coloring. 
\end{problem}

We conclude with $\Gamma_{\rho}(P_k)$. This is a first step into the area of infinite lattices, which have been extensively studied for the packing chromatic number as mentioned in the introduction. Clearly, $\Gamma_{\rho}(\mathbb{Z}^3)$ is not finite, since $\chi_{\rho}(\mathbb{Z}^3)$ is not finite (see \cite{finbow-rall}). However, the questions concerning $\Gamma_{\rho}(\mathbb{Z}^2)$, $\Gamma_{\rho}({\cal H})$ for a hexagonal lattice ${\cal H}$, and $\Gamma_{\rho}(P_k \Box \mathbb{Z})$, where $\Box$ denotes the Cartesian product, remain open. We refrain from making predictions regarding the finiteness of the Grundy packing chromatic number for the aforementioned (and other) infinite lattices. We begin with a path, where $\Gamma_{\rho}(P_k)$ is bounded, extending from the following result for finite paths to a (two-way) infinite path. 

\begin{theorem} 
\textcolor{black}{We have $\Gamma_{\rho}(\mathbb{Z})=7$. More precisely, the following holds:
\begin{equation*}
\Gamma_{\rho}(P_k)=\left\{ 
\begin{array}{ccc}
k & : & k\leq 3, \\
k-1 & : & 4\leq k\leq 5, \\
4 & : & 6\leq k\leq 7, \\
5 & : & 8\leq k\leq 14, \\
6 & : & 15\leq k\leq 24, \\
7 & : & k\geq 25
\end{array}
\right. .
\end{equation*}}
\end{theorem}

\begin{proof}
\textcolor{black}{Notice that $\Gamma_{\rho}(\mathbb{Z})=7$ follows from the result for paths and therefore, we concentrate on paths. The result follows for $k\leq 3$ by Proposition \ref{ncol} and for $4\leq k\leq 5$ by Theorem \ref{n-1}. For the remainder, we first discuss the lower bound for $\Gamma_{\rho}(P_k)$. Observe that the packing colorings presented in Figure \ref{paths} represent the desired colorings for $P_4,P_5,P_8$ and $P_{15}$, respectively. Besides,} it is straightforward to verify that the following coloring is indeed a greedy packing $7$-coloring of $P_{25}$:
$$3124152136124137121531241.$$
\textcolor{black}{Now let $\ell$ be a positive integer different from $A=\{1,2,3,4,5,8,15,25\}$. It is easy to enlarge the given packing coloring for $P_i$, $i\in A$, to a packing coloring of $P_{\ell}$, where $i$ is the greatest element of $A$ smaller than $\ell$. Thus, the lower bound follows.} 

\textcolor{black}{For the other inequality we first show that if $\Gamma_{\rho}(P_k)\geq 5$, then $k\geq 8$, which settles the case for $5\leq k\leq 6$. For this let $c(x)=5$ and $c(y)=4$ where $d(x,y)\leq 4$. If $xy\in E(P_k)$, then the coloring $1321451231$ is enforced, and $k\geq 10$. If $d(x,y)=2$, then their common neighbor must have color $1$. To have a packing coloring we need to have colors $213$ (or even a longer sequence) on the other sides of $x$ and $y$. Hence, we get $312415213$, which means that $k\geq 9$. Let now $d(x,y)=3$ with the (potential) sequence of colors $abcd4ef5ghi$. If $f=1$, then $e\neq 1$ and $d=1$. This also means that $e\neq 3$ and $e=2$ follows. Observe that $\{a,b,c\}=\{1,2,3\}$ to ensure that color $3$ is close enough to $y$, which already yields $k\geq 8$. Now let $e=1$. Then $f\neq 1$ and $g=1$. This also means that $f=2$. Now, $\{b,c,d\}=\{1,2,3\}$ to ensure that color $3$ is close enough to $y$, which again gives $k\geq 8$. It remains to check the case where $d(x,y)=4$. If there are two vertices of color $1$ between $x$ and $y$, then we have a sequence $41215abc$. Again $\{a,b,c\}=\{1,2,3\}$ to ensure that color $3$ is close enough to $x$ and therefore, $k\geq 8$. If there is only one vertex between $x$ and $y$ with color $1$, then we have two shortest possibilities $14213512$ or $2143125132$ (if neighbors of $x$ and $y$ have the same color, the sequence is even longer). In all cases we have $k\geq 8$ and we are done with $5\leq k\leq 6$.}

\textcolor{black}{For the remainder, we first demonstrate that two consecutive vertices cannot both be assigned colors greater than 4. Suppose to the contrary that colors $i\geq 5$ and $j>i$ are on two consecutive vertices $x$ and $y$, respectively. It is straightforward to observe that four vertices preceding $x$ and following $y$ must receive colors $1,2,3,1$. However, then $x$ and $y$ have no vertex of color $4$ at distance at most $4$ from them, a contradiction with a greedy packing $\Gamma_{\rho}(P_k)$-coloring.
}\\

\textcolor{black}{ Next, we show that in a packing coloring of a path, if two vertices with colors $5$ and color $a\in\{6,7\}$ are at distance at most five from each other, then
the path connecting them does not include a vertex with color $4$. Suppose, for the sake of contradiction, that this is not the case. The sequences of colors $2154121a21$, $51214a12$, $215412a12$ and $15214a12$ show that $4$ is not a neighbor of $5$ or $a$ because vertex of color $4$ has no vertex of color $3$ at distance at most $3$. If it is four, then we have a sequence $5141a$ and $4$ has no color $2$ close enough, so the distance must be five. In this case we have $52141a31$, $1351412a1$, $51241a31$ or $1351421a$ and either $a$ or $5$ has no color $2$ close enough. Hence a contradiction in all cases.
} 

\textcolor{black}{To prove that $\Gamma_{p}(P_k) = 5$ if $8 \le k \le 14$, we assume that $\Gamma_{\rho}(P_k)\geq 6$ now and we will show that $k\geq 15$ in this case. Let $x$ and $y$ be vertices colored with $6$ and $5$, respectively, with $2\leq d(x,y)\leq 5$.  We systematically examine different sequences of packing colorings of a path depending on the distance between $x$ and $y$ and all possible colors of vertices between them (where color $4$ is not among them, as shown earlier). We will omit several details, but usually positions of colors $1,2$ and $3$ are forced in these sequences to ensure the feasibility of a packing coloring. Every contradiction is marked by $a/b$, which means that color $a$ does not have color $b$ close enough.  If $d(x,y)=2$, then we get the following minimum sequence of colors on $P_{15}$:
$$214312516213412.$$
If $d(x,y)=3$, then we have the following minimal forced possible sequences of colors:}

\begin{center}
\begin{tabular}{|c|c|}\hline
$512613214$ & a $6/4$ contradiction\\ \hline
$512614312$ & a $4/2$ contradiction \\ \hline
$123152164123$ & a $6/3$ contradiction \\ \hline
$51361$ & a $3/2$ contradiction\\ \hline
$153162$ & a $3/2$ contradiction\\ \hline
\end{tabular}%
\end{center}

\begin{center}
\begin{tabular}{|c|c|}\hline
$31241521361241321$ & $P_{17}$\\ \hline
$13214215312614213$ & $P_{17}$ \\ \hline
$52126$ & a contradiction with color $2$\\ \hline
$1531361$ & a $3/2$ contradiction\\ \hline
$142135121631241$ & $P_{15}$\\ \hline
$132145121641231$ & a $6/3$ and $5/3$ contradiction \\ \hline
$51316$ & a contradiction with $3/2$\\ \hline 
\end{tabular}%
\end{center}
\textcolor{black}{If $d(x,y)=5$, we have the following eight minimal forced possibilities presented in the following table:}\\
\begin{center}
\begin{tabular}{|c|c|}\hline
$21341251321641231$ & $P_{17}$ \\ \hline
$13214512316214312$ & $P_{17}$ \\ \hline
$51212613214$ & a $6/4$ contradiction\\ \hline
$142135121361241321$ & $P_{17}$ \\ \hline
$21431251312614213$ & $P_{17}$ \\ \hline
$41231521216$ & a $5/4$ contradiction\\ \hline
$31241521316214312$ & $P_{17}$\\ \hline
$123142153121631241$ & $P_{18}$\\ \hline
\end{tabular}%
\end{center}

\textcolor{black}{To prove that $\Gamma_{p}(P_k)=6$ if $15 \le k \le 24$, we assume that $\Gamma_{\rho}(P_k)\geq 7$ and we will show that $k\geq 25$ in this case. We continue in a similar fashion as before. Let $x,y,z$ and $w$ be vertices colored by $7,6,5$ and $4$, respectively, with $2\leq d(x,y)\leq 6$, $2\leq d(x,z)\leq 5$ and $d(x,w)\leq 4$. Further, recall that vertex of color $4$ is not between $x$ and $z$. First, we show that vertex colored with $4$ must be between $x$ and $y$. It is easy to see from the sequences of colors $715214$ (distance between $x$ and $w$ is too big) and $7215412$ (a $5/3$ contradiction), that $z$ cannot be between $x$ and $w$. Similarly, we show that $y$ is not between $x$ and $w$. This is not possible when $d(x,y)=2$ by the sequence of colors $12617$. If $d(x,y)=3$, then a possible color $3$ between $x$ and $y$ yields a $3/2$ contradiction. We are left with sequences $2146127$ (a $6/3$ contradiction) and $46217$ (a $6/1$ contradiction). So, $w$ is between $x$ and $y$. 
}

\textcolor{black}{Now we analyze different sequences of colors that depend on $d(x,y)$. Clearly, $d(x,y)>2$ because $647$ gives $4/1$ contradiction. If $d(x,y)=3$, then the only possible sequences $61471$ and $16417$ yield a $4/2$ contradiction. Now let $d(x,y)=4$. All possible sequences in the following table result in a contradiction:}
\begin{center}
\begin{tabular}{|c|c|}\hline
$31641271$ & a $6/2$ contradiction \\ \hline
$164137$ & a $4/2$ contradiction\\ \hline
$61417$ & a $4/2$ contradiction\\ \hline
$6214713$ & a $7/2$ contradiction\\ \hline
$631471$ & a $4/2$ contradiction\\ \hline 
\end{tabular}%
\end{center}
\textcolor{black}{Moreover, if $d(x,y)=5$, all the possible color sequences give a contradiction as shown in the next table. Here, notice that between $x$ and $y$ is $w$ together with two vertices of color $1$ and one vertex of color $2$-color $3$ gives an immediate $3/2$ contradiction.}
\begin{center}
\begin{tabular}{|c|c|}\hline
$21641217$ & a $4/3$ contradiction \\ \hline
$61214712$ & a $4/3$ contradiction\\ \hline
$13614217$ & a $6/2$ contradiction\\ \hline
$61241731$ & a $7/2$ contradiction\\ \hline
$136141271$ & a $6/2$ contradiction\\ \hline
$62141731$ & a $7/2$ contradiction\\ \hline
\end{tabular}%
\end{center}
\textcolor{black}{We finish this case with $d(x,y)=6$. Here, it is easy to see that we get a contradiction if there are three vertices of color one between $x$ and $y$ by sequences $612141731$-a $7/2$ contradiction, and $136141217$-a $6/2$ contradiction. So, we have between $x$ and $y$ two vertices of color $1$, $w$ and either two vertices of color $2$ or one vertex of color $2$ and one vertex of color $3$ (notice that if we have two vertices of color $3$ an immediate $3/2$ contradiction follows). We analyze the different positions of the mentioned vertices between $x$ and $y$ in the following table. Notice here that sometimes endings are not really fixed, but any change of them gives the same outcome.}
\begin{center}
\begin{tabular}{|c|c|}\hline
$6214217$ & a $4/3$ contradiction\\ \hline
$6212417$ & a contradiction with color $2$\\ \hline
$216314217$ & a $3/2$ contradiction \\ \hline
$6214317$ & a $3/2$ contradiction\\ \hline
$142135121631241721351241321$ & $P_{27}$ \\ \hline
$64123172134125$ & a $7/5$ contradiction \\ \hline
$16413217$ & a $4/2$ contradiction\\ \hline
$31541261321471$ & a $5/2$ contradiction\\ \hline
$3124152136124137121531241$ & $P_{25}$\\ \hline
$6134127$ & a $3/2$ contradiction\\ \hline
$213412513216142137121531241$ & a $P_{27}$\\ \hline
$6143127$ & a $4/2$ contradiction \\ \hline
$316412137$ & a $6/2$ contradiction\\ \hline
$631214713$ & a $7/2$ contradiction\\ \hline
$164131271$ & a $4/2$ contradiction\\ \hline
$62131471$ & a $4/2$ contradiction \\ \hline
$162141371$ & a $3/2$ contradiction\\ \hline
$163141271$ & a $3/2$ contradiction\\ \hline
\end{tabular}%
\end{center}

Finally, we assume that $\Gamma_{\rho}(P_k)>7$ for some $k$ and let $c=(V_1,\dots,V_{\Gamma_{\rho}(P_k)})$ be a greedy packing $\Gamma_{\rho}(P_k)$-coloring of $P_k$. Since $\Gamma_{\rho}(P_k)>7$, there exists a vertex $v$ with $c(v)=8$. Let $x_i$ and $y_i$ be vertices at distance $i$ from $v$ for any possible $i$. We may assume that $x_i$ precedes $v$ and $y_i$ follows $v$. \textcolor{black}{By symmetry,} we may assume that all the colors $1,\dots,7$ must appear on vertices $x_1,\dots,x_7,y_1\dots,y_6$ and let $P$ be a $x_7,y_6$-subpath of $P_k$. We separate three cases with respect to the positions of vertices of color $1$ that are adjacent to $v$. In all three cases we will obtain a contradiction, which shows that $\Gamma_{\rho}(P_k)\leq 7$ as well.

\noindent \textbf{Case 1.} $c(y_1)=1\neq c(x_1)$.  
In this case, $c(x_2)=1$ and the $i(P_k)$-set that contains $y_1$ and $x_2$ \textcolor{black}{contains} also $y_4,y_7,x_5,x_8$. Optimal positions for color two are vertices $x_1,x_6,y_3,y_8$ to dominate $P$ to obtain a maximal independent set of $G^2-V_1$. Now, $P^3-(V_1\cup V_2)$ is a path $x_7x_4x_3vy_2y_5y_6$ and at least two vertices of them must have color $3$ for a maximal independent set (when $x_7$ or $y_6$ has a vertex of color $3$ at distance at most $3$ outside of $P$). Therefore, exactly four vertices remain for colors $4,5,6,7$. Clearly, $c(x_7)=7$ and $c(y_6)=6$. Moreover, two consecutive vertices cannot have colors $5$ and $6$, which yields $c(y_5)\neq 5$ and $c(y_5)\neq 4$ because $d(v,y_5)=5$. Hence, $c(y_5)=3$ and consequently $c(x_3)=3$. If $c(y_2)=4$, then $c(x_4)=5$, a contradiction because $x_4$ has no vertex of color $4$ at distance at most $4$ \textcolor{black}{(here $c(x_8)=1$, otherwise $x_7$ has no neighbors of color one)}. Similarly, if $c(x_4)=4$, then $c(y_2)=5$, a contradiction since $y_2$ has no vertex of color $4$ at distance at most $4$. So, the coloring is not optimal for the first two color classes (we have more vertices of color $1$ or $2$ than needed). Nevertheless, now we have more vertices of color $1$ or $2$ and we are left with only three vertices for the remaining four colors $4,5,6,7$, which is again not possible.     

\noindent \textbf{Case 2.} $c(x_1)=1\neq c(y_1)$. We have $c(y_2)=1$ and in optimal case $c(y_5)=1$ and either $c(y_7)=1$ or $c(y_8)=1$. Besides, we have at least two vertices of color $1$ among $x_i$, $I\in\{3,\dots,7\}$. Hence, we have at least five vertices of color $1$ on $P$. Now, $P^2-V_1=P_9$ and there are at least three vertices in $V_2$. Further, $P^3-(V_1\cup V_2)=P_6$ and there are at least two vertices of $P$ in $V_3$. With this, only three vertices remain for four colors $4,5,6,7$, which is not possible. If the coloring is not optimal, then we have even less vertices for colors $4,5,6,7$, a contradiction.  

\noindent \textbf{Case 3.} $c(y_1)=1=c(x_1)$. In optimal case $c(y_4)=1=c(y_7)$ and there are two vertices of color $1$ among $x_i$, $I\in\{3,\dots,7\}$. So, at least five vertices of color $1$ are on $P$. Now, $P^2-V_1=P_9$ and there are at least three vertices in $V_2$. Furthermore, $P^3-(V_1\cup V_2)$ contains six vertices which induce $P_6$ or $P_5\cup K_1$, and there are at least two vertices of $P$ in $V_3$. With this, only three vertices remain for four colors $4,5,6,7$, which is not possible. If the coloring is not optimal, then we have even less vertices for colors $4,5,6,7$, a contradiction.  
\end{proof}

\end{document}